\documentclass[11pt]{amsart}
\usepackage{amsmath,amssymb,amsthm,mathtools}
\usepackage[margin=1in]{geometry}

\usepackage{color,comment,appendix}

\newtheorem{theorem}{Theorem}[section]
\newtheorem{lemma}[theorem]{Lemma}
\newtheorem{proposition}[theorem]{Proposition}
\newtheorem{corollary}[theorem]{Corollary}
\theoremstyle{definition}
\newtheorem{definition}[theorem]{Definition}
\newtheorem{remark}[theorem]{Remark}
\newtheorem{example}[theorem]{Example}
\newtheorem{question}[theorem]{Question}
\newtheorem{conjecture}[theorem]{Conjecture}

\newcommand{\GL}{\mathrm{GL}}  
\newcommand{\PGL}{\mathrm{PGL}}  
\newcommand{\PSL}{\mathrm{PSL}}

\newcommand{\Hom}{\mathrm{Hom}}

\newcommand{\PP}{\mathbb{P}}

\usepackage[colorlinks=true, linkcolor=blue, anchorcolor=blue, citecolor=blue, filecolor=blue, menucolor= blue, urlcolor=blue,pdfencoding=auto, psdextra, pagebackref=true]{hyperref}

\begin{document}

\title[Finite subgroups from configurations of skew $n$-planes]{Collinearly complete sets and finite subgroups from configurations of  skew $n$-planes in $\PP^{2n+1}_K$}
\author{Giuseppe Favacchio}
\address[G. Favacchio]{Dipartimento di Ingegneria, Universit\`{a} degli studi di Palermo, Viale delle Scienze, `
90128 Palermo, Italy}
\email{giuseppe.favacchio@unipa.it}
\author{Jake Kettinger}
\address[J. Kettinger]{Department of Mathematics, Colorado State University, Fort Collins, Colorado, 80523, United States}
\email{jake.kettinger@colostate.edu}

\keywords{ Skew $n$-planes,  collinearly complete sets,  finite subgroups of PGL(n), group actions on projective space}
\subjclass[2020]{
14N20, 
14L30, 
20H20,
05E18, 
20L05
}

\date{}

\begin{abstract}
We study collinearly complete finite sets of points arising from configurations
of pairwise skew $n$-planes in $\PP^{2n+1}_K$. To such a configuration we
associate a groupoid generated by the natural collinearity correspondences
between the $n$-planes, and we investigate the geometry of its finite orbits.

In characteristic zero, we prove a rigidity result for the case in which the
associated group is finite cyclic. After a suitable normalization, the matrices
defining the configuration are simultaneously diagonalizable and admit a common
two-block decomposition. Consequently, the orbit of a general point meets each
$n$-plane in a collinear set, and the full orbit is contained in a distinguished
projective $3$-space. This reduces the geometry of such orbits to the classical
case of skew lines in $\PP^3_{\mathbb C}$: inside the distinguished
$\PP^3_{\mathbb C}$, the orbit is geproci. We also prove that finite unions of
general orbits are cut out set-theoretically from the union of the $n$-planes
by a reducible surface.

Finally, we show that this characteristic-zero rigidity fails in positive
characteristic. In characteristic $2$, we construct cyclic examples whose orbit
slices are Fano plane configurations, and we exhibit a genuinely
higher-dimensional finite non-cyclic example in projective
$5$-space with associated group
$\PGL_3(\mathbb F_2)\cong \PSL_2(\mathbb F_7)$.
\end{abstract}
\maketitle

\section{Introduction}
Configurations of pairwise skew lines in $\PP^3_K$ give rise to a rich
interplay between projective geometry, finite group actions, and special point
sets. The connection between configuration groupoids, collinearly complete
sets, and geproci configurations in $\PP^3_K$ was introduced in
\cite{politus3}. For recent developments on configurations of skew lines and their associated projective subgroups, see \cite{politus7}, which classifies these sets of lines having a given finite group.
  For more on geproci sets, see also \cite{politus1,Kettinger2024,kettinger2025}.

The higher-dimensional analogue for configurations of pairwise skew
$n$-planes in $\PP^{2n+1}$, together with the corresponding finiteness
questions, was initiated by Ganger in her thesis \cite{ganger2024}. In this
paper we investigate this higher-dimensional setting using the groupoid
approach developed in \cite{f2025}. We focus on the structure of the associated group in
finite cases. 

The first part of the paper concerns characteristic $0$. Our main result in
this direction is Theorem~\ref{thm:finite-cyclic-orbit-line}. It shows that,
if $G_{\mathcal L}$ is finite cyclic, then the orbit of a general point is
highly constrained: the orbit slice on each $n$-plane is collinear, and the
lines obtained in this way fit together inside a distinguished projective
$3$-space containing the whole orbit. The key linear-algebra input is the
existence of a common two-block decomposition for the matrices defining the
configuration, obtained in Lemma~\ref{lem:block-compatibility}.

This reduction to a distinguished $\PP^3_{\mathbb C}$ allows us to return to the
classical geometry of skew lines. In Proposition~\ref{prop:P3-geproci}, we show
that each general orbit is geproci inside its distinguished $\PP^3_{\mathbb C}$. More
precisely, the intersections of the distinguished $\PP^3_{\mathbb C}$ with the
$n$-planes of the configuration form a configuration of skew lines, and the
orbit is collinearly complete with respect to this induced configuration.

We then prove a set-theoretic intersection result for finite unions of general
orbits. If $Z$ is a finite union of general orbits and $X$ denotes the union
of the $n$-planes in the configuration, then
Theorem~\ref{thm:union-orbits-set-theoretic-intersection} constructs a
reducible surface $Y\subset \PP^{2n+1}_{\mathbb C}$ such that $Z=X\cap Y$
set-theoretically. This result is weaker than the geprofi property asked for in
Question~\ref{conj:geprofi}, since geprofi would require a proper intersection
after a general projection. Nevertheless, it gives a higher-dimensional
analogue of the intersection phenomena arising from skew lines in $\PP^3_{\mathbb C}$.

We also discuss finite non-cyclic groups in characteristic $0$. We do not
know any example with $n>1$. The spectral obstructions developed in the
subsection \ref{sec:non-cyclic0}
suggest that the finite case may be much more rigid than in $\PP^3_{\mathbb C}$. This
leads to Conjecture~\ref{conj:finite-implies-cyclic}, which asks whether, in
characteristic $0$, every finite group $G_{\mathcal L}$ associated with a
configuration of $n$-planes in $\PP^{2n+1}_{\mathbb C}$, with $n>1$, must be cyclic.

The final part of the paper shows that the characteristic zero picture does
not extend to positive characteristic. In Section~\ref{sec:charp},
Example~\ref{ex:char2-2} constructs a cyclic example in characteristic $2$
with associated group $G_{\mathcal L}\cong C_7$. Unlike in characteristic
$0$, the orbit slices on the individual planes are not collinear; rather, the
seven points on each plane form a Fano plane configuration.
Proposition~\ref{prop:positive-char-singer} generalizes this construction
using Singer cycles: over $\mathbb F_q$, one obtains cyclic groups of order
$q^2+q+1$ whose general orbit slices span the whole plane.

Finally, Example~\ref{ex:char2} gives a genuinely non-cyclic finite example in
positive characteristic. In characteristic $2$, we construct a configuration
in $\PP^5_K$ whose associated group is
$G_{\mathcal L}\cong \PGL_3(\mathbb F_2)\cong \PSL_2(\mathbb F_7)$, of order
$168$. Proposition~\ref{prop:not-pgl2-char2} shows that this group does not
embed in $\PGL_2(K)$, emphasizing that the example is genuinely
higher-dimensional and not inherited from the classical line-configuration
case.

The paper is organized as follows. In Section~\ref{sec:prel} we introduce the
basic setup and the configuration groupoid associated with a finite collection
of pairwise skew $n$-planes in $\PP^{2n+1}_K$. In particular,
Section~\ref{sec:cc} develops the notion of collinearly complete sets in this
higher-dimensional setting. Section~\ref{sec:char0} is devoted to
characteristic $0$: we prove the finite cyclic rigidity theorem, construct
the distinguished $\PP^3_{\mathbb C}$, prove the geproci result inside it, and discuss
obstructions to non-cyclic finite groups. Finally, Section~\ref{sec:charp}
turns to positive characteristic, where we exhibit cyclic and non-cyclic
examples showing that the characteristic-zero rigidity phenomena do not
persist.

\section{Preliminaries and basic setup}\label{sec:prel}
Let $K$ be a field of arbitrary characteristic, and let $\PP^n_K$ denote projective space over $K$.
In this section we recall and extend some of the definitions and results from \cite{politus3} and \cite{f2025}.
Both papers are concerned with $\PP^3$; however, many of their constructions and arguments adapt naturally to $\PP^{2n+1}_K$.

In this section we describe normalized configurations in matrix form, compute the projectivities $f_{ijk}$ and the associated group $G_{\mathcal L},$ and then discuss examples and a geometric interpretation in terms of invariant subspaces. Related higher-dimensional groupoid constructions for pairwise skew $n$-subspaces in $\PP^{2n+1}_K$ were studied by Ganger; see in particular \cite[Chapters 5 and 6]{ganger2024}.

\subsection{The group of the groupoid}\label{sec:setup}
Let $\mathcal L=\{L_1,\dots,L_r\}$ be a finite set, with $r\ge 3$, of pairwise skew $n$-planes in $\PP^{2n+1}_K$.
For any triple of distinct indices $i,j,k$, we define a map
\[
f_{ijk}:L_i\longrightarrow L_j
\]
as follows: for $p\in L_i$, let
\[
T=\langle p,L_k\rangle
\]
be the $(n+1)$-plane spanned by $p$ and $L_k$, and set
\[
f_{ijk}(p):=T\cap L_j.
\]
Since the $n$-planes in $\mathcal L$ are pairwise skew, this intersection consists of a single point, so $f_{ijk}$ is well defined. Let $C_{\mathcal L}$ be the groupoid whose objects are the $n$-planes in $\mathcal L$ and whose morphisms are generated by the maps $f_{ijk}$, with composition whenever defined.
Since $r\ge 3$, for any pair of distinct indices $i, j$, there exists an index $k \neq i, j$. Thus, the map $f_{ijk}: L_i \to L_j$ is always a well-defined morphism in $C_{\mathcal L}$; in particular, $C_{\mathcal L}$ is connected.
\begin{definition}[Group of the groupoid {\cite[\S2]{politus3}}]\label{def:group-of-groupoid}
For $L_i\in\mathcal L$, the endomorphism set
\[
G_i(\mathcal L)=\Hom_{C_{\mathcal L}}(L_i,L_i)
\]
is a group under composition.
Since $C_{\mathcal L}$ is connected, the groups $G_i(\mathcal L)$ are all noncanonically isomorphic.
We denote their common isomorphism type by
$G_{\mathcal L},$
and call it the \emph{group of the groupoid} associated with $\mathcal L$.
\end{definition}

\subsection{Collinearly complete sets in \texorpdfstring{$\PP^{2n+1}$}{P{2n+1}}}\label{sec:cc}
The key geometric ingredient is the existence of a unique transversal line
through any point of one plane meeting two others. We establish this first,
then use it to define and characterize collinearly complete sets.

\begin{lemma}\label{lem:unique-line}
Let $L_i,L_j,L_k\subset \PP^{2n+1}_K$ be pairwise skew $n$-planes, and let $p\in L_i$.
Then there exists a unique line
\[
\ell=\ell_{jk}(p)\subset \PP^{2n+1}_K
\]
through $p$ meeting both $L_j$ and $L_k$.
Moreover,
\[
\ell\cap L_j=f_{ijk}(p)
\qquad\text{and}\qquad
\ell\cap L_k=f_{ikj}(p).
\]
\end{lemma}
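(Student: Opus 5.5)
The proof is pure linear algebra in $K^{2n+2}$. Write $L_i=\PP(V_i)$ etc., with $\dim V_i=\dim V_j=\dim V_k=n+1$. Pairwise skewness means $V_a\cap V_b=0$ for $a\ne b$, so $K^{2n+2}=V_a\oplus V_b$ for any two of the three. Let $p=[v]$ with $0\ne v\in V_i$.

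\emph{Existence.} Decompose $v$ in the direct sum $V_j\oplus V_k$ as $v=v_j+v_k$ with $v_j\in V_j$ and $v_k\in V_k$. Both components are nonzero: if $v_k=0$, then $v\in V_i\cap V_j=0$, and symmetrically for $v_j$. Moreover $v_j$ and $v_k$ are linearly independent, since $V_j\cap V_k=0$. So $\ell=\PP(\langle v_j,v_k\rangle)$ is a line containing $p$, meeting $L_j$ at $[v_j]$ and $L_k$ at $[v_k]$. Note that $p\ne[v_j]$ and $p\neq [v_k]$, because $p\notin L_j\cup L_k$.

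\emph{Uniqueness.} Let $\ell'$ be any line through $p$ meeting $L_j$ at $[w_j]$ and $L_k$ at $[w_k]$. Then $[w_j]\ne[w_k]$ by skewness, so $\ell'=\PP(\langle w_j,w_k\rangle)$. Since $v$ lies on $\ell'$, we can write $v=\alpha w_j+\beta w_k$. By uniqueness of the decomposition in $V_j\oplus V_k$, we get $\alpha w_j=v_j$ and $\beta w_k=v_k$. Hence $[w_j]=[v_j]$, $[w_k]=[v_k]$, and $\ell'=\ell$.

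\emph{Identification with $f_{ijk}$.} The $(n+1)$-plane $T=\langle p,L_k\rangle$ corresponds to $Kv\oplus V_k$, which contains $v_j=v-v_k$. So $[v_j]\in T\cap L_j$. By the well-definedness of $f_{ijk}$ established in \S\ref{sec:setup}, $T\cap L_j$ is a single point, so $f_{ijk}(p)=[v_j]=\ell\cap L_j$. To be self-contained, one can also check directly that $(Kv\oplus V_k)\cap V_j$ is one-dimensional. Any element $av+u$ with $u\in V_k$ lying in $V_j$ satisfies $av_j+(av_k+u)\in V_j$, which forces $u=-av_k$; the intersection is therefore $Kv_j$. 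Swapping the roles of $j$ and $k$ gives $f_{ikj}(p)=[v_k]=\ell\cap L_k$.

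No step is a real obstacle. The only point needing care is the nondegeneracy check: $v_j$ and $v_k$ must both be nonzero and independent, so that $\ell$ really is a line and is unique. Every instance of this follows from the pairwise trivial intersections $V_a\cap V_b=0$.
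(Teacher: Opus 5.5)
Your proof is correct, but it takes a different route from the paper's.

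\textbf{The paper's argument.} It argues projectively. It projects from $p$ to a hyperplane $H\not\ni p$. It then observes that $\pi_p(L_j)$ and $\pi_p(L_k)$ are two $n$-planes in $H\simeq\PP^{2n}$ meeting in a single point $q_H$, by the count $n+n-2n=0$, and sets $\ell=\langle p,q_H\rangle$. Uniqueness holds because any such line must project to a common point of the two images. The identification with $f_{ijk}$ is stated in one line.

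\textbf{Your argument.} You split $v$ in $K^{2n+2}=V_j\oplus V_k$ and read off the line $\PP(\langle v_j,v_k\rangle)$. Uniqueness comes from the uniqueness of the decomposition.

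\textbf{Comparison.} The two proofs express the same fact from different sides. Projecting from $p$ is passing to $K^{2n+2}/Kv$, and $q_H$ is the image of $v_j\equiv -v_k$. Your version has two advantages.
\begin{enumerate}
\item It makes explicit the nondegeneracy checks the paper leaves implicit. These are that $p\notin L_j\cup L_k$, so the projected spaces really are $n$-planes; that they span $H$, which is what makes their intersection a single point rather than merely nonempty; and why $\ell\cap L_j$ lies in $\langle p,L_k\rangle$, which you show via $v_j=v-v_k\in Kv\oplus V_k$.
\item It exhibits $f_{ijk}$ as the projectivization of the restriction to $V_i$ of the projection $V_j\oplus V_k\to V_j$. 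In normalized coordinates, solving $(u,M_iu)=(x,M_jx)+(y,M_ky)$ immediately gives $x=(M_j-M_k)^{-1}(M_i-M_k)u$. This is the formula of Lemma~\ref{lem:formula-fijk}, which the paper only cites from earlier work.
\end{enumerate}
The paper's version, in exchange, is shorter and more geometric: it explains the result as a dimension count of two $n$-planes in $\PP^{2n}$.
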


\begin{proof}
Choose a hyperplane $H\subset \PP^{2n+1}_K$ such that $p\notin H$, and let 
$\pi_p:\PP^{2n+1}_K\dashrightarrow H$ 
be the projection from $p$ onto $H$. The images $\pi_p(L_j)$ and
$\pi_p(L_k)$ are two $n$-planes in $H\simeq \PP^{2n}_K$. Since
$n+n-2n=0$, they meet in a unique point, which we denote by $q_H$.

Let 
$\ell=\langle p,q_H\rangle.$ 
Then $\ell$ meets both $L_j$ and $L_k$, because $q_H$ lies in both
$\pi_p(L_j)$ and $\pi_p(L_k)$.

We now prove uniqueness. If $\ell'$ is any line through $p$ meeting both
$L_j$ and $L_k$, then $\pi_p(\ell')$ is a point of
$\pi_p(L_j)\cap \pi_p(L_k).$
By the uniqueness of $q_H$, we have $\pi_p(\ell')=q_H$, and hence
$\ell'=\langle p,q_H\rangle=\ell$.

Finally, the point $\ell\cap L_j$ is the unique point of $L_j$ lying in
$\langle p,L_k\rangle$, so $\ell\cap L_j=f_{ijk}(p)$. Similarly,
$\ell\cap L_k=f_{ikj}(p)$.
\end{proof}
The following notion is the natural higher-dimensional analogue of the definition in
\cite[Def.~2.1.10]{politus3} for the case $n=1$. This definition is also considered in Chapter~6 in \cite{ganger2024}.

\begin{definition}[Collinearly complete sets]\label{def:lc}
Let $\mathcal L$ be a finite collection of pairwise skew $n$-planes in $\PP^{2n+1}_K$, and let
$Z\subseteq \bigcup_{L\in\mathcal L}L$
be a nonempty set of points.
We say that $Z$ is \emph{collinearly complete with respect to $\mathcal L$} if whenever
$\ell\subset \PP^{2n+1}_K$
is a line meeting three distinct members $L_i,L_j,L_k\in\mathcal L$ and
$\ell\cap L_i\in Z,$
then
$\ell\cap L_j,\ \ell\cap L_k\in Z.$
\end{definition}

By Lemma~\ref{lem:unique-line} the following result is immediate.
\begin{proposition}\label{prop:lc-orbits}
A subset $Z\subseteq \bigcup_{L\in\mathcal L}L$ is collinearly complete if and only if it is closed under
the maps $f_{ijk}$.
Equivalently, $Z$ is collinearly complete if and only if it is a union of $C_{\mathcal L}$-orbits.
\end{proposition}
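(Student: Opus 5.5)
The plan is to reduce both equivalences to Lemma~\ref{lem:unique-line}, which identifies the lines meeting three members of $\mathcal L$ with the graphs of the maps $f_{ijk}$.

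\emph{Collinearly complete implies closed under the maps.} Suppose $Z$ is collinearly complete, and let $p\in Z\cap L_i$ and distinct indices $i,j,k$ be given. By Lemma~\ref{lem:unique-line} there is a line $\ell=\ell_{jk}(p)$ through $p$ meeting $L_j$ and $L_k$. Its intersections are $\ell\cap L_j=f_{ijk}(p)$ and $\ell\cap L_k=f_{ikj}(p)$. Since $\ell$ meets the three distinct members $L_i,L_j,L_k$, we need $\ell\cap L_i=p$, and this holds because $p\in\ell\cap L_i$. I would also check that $\ell\not\subset L_i$, since otherwise $\ell$ would meet $L_j$ inside $L_i$, contradicting skewness. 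Definition~\ref{def:lc} then gives $f_{ijk}(p)\in Z$. So $Z$ is closed under every $f_{ijk}$.

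\emph{Closed under the maps implies collinearly complete.} Suppose $Z$ is closed under all $f_{ijk}$, and let $\ell$ be a line meeting $L_i,L_j,L_k$ with $p:=\ell\cap L_i\in Z$. Again $\ell\not\subset L_i$, so $\ell\cap L_i$ is a single point. Then $\ell$ is a line through $p$ meeting $L_j$ and $L_k$, so by the uniqueness part of Lemma~\ref{lem:unique-line} we have $\ell=\ell_{jk}(p)$. Hence $\ell\cap L_j=f_{ijk}(p)\in Z$ and $\ell\cap L_k=f_{ikj}(p)\in Z$.

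\emph{Reformulation in terms of orbits.} The morphisms of $C_{\mathcal L}$ are the finite composites of the maps $f_{ijk}$. Each $f_{ijk}$ is a bijection with inverse $f_{jik}$, by the symmetry of the line $\ell_{jk}(p)$ in Lemma~\ref{lem:unique-line}. So the $C_{\mathcal L}$-orbit of $p\in L_i$ is the set of images of $p$ under all composable words in the $f$'s. A set closed under all $f_{ijk}$ contains the orbit of each of its points, so it is a union of orbits. Conversely, a union of orbits is closed under each $f_{ijk}$ by definition of an orbit.

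\emph{Where care is needed.} The only delicate point is making sure a point of $Z$ that lies on $L_i$ is not claimed to lie on another plane. This is automatic, because the planes are pairwise skew and hence disjoint, so "$p\in Z\cap L_i$" is unambiguous. I expect no real obstacle beyond this bookkeeping.
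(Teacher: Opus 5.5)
Your proposal is correct and takes the same approach as the paper, which simply states that the result is immediate from Lemma~\ref{lem:unique-line}. You unpack exactly that intended argument: existence and uniqueness of the transversal $\ell_{jk}(p)$ give both directions, $f_{jik}$ inverts $f_{ijk}$ so orbits are closures under the generators, and your skewness checks (such as $\ell\not\subset L_i$) supply the right bookkeeping.
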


\begin{lemma}\label{lem:slices-projectively-equivalent}
Let $\mathcal L=\{L_1,\ldots,L_s\}$ be a finite collection, with $s\ge 3$, of pairwise skew $n$-planes in $\PP^{2n+1}_K$, and let
$Z\subseteq \bigcup_{L\in\mathcal L}L$
be a collinearly complete set with respect to $\mathcal L$.
Then for any two members $L_i,L_j\in\mathcal L$, the sets
\[
Z\cap L_i \subset L_i
\qquad\text{and}\qquad
Z\cap L_j \subset L_j
\]
are projectively equivalent.
More precisely, if $\varphi\in \Hom_{C_{\mathcal L}}(L_i,L_j)$, then
\[
\varphi(Z\cap L_i)=Z\cap L_j.
\]
\end{lemma}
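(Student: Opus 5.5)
The plan is to reduce everything to the generators $f_{ijk}$ and then use that each of them is a bijective projectivity. First I will check that for distinct $i,j,k$ the map $f_{ijk}\colon L_i\to L_j$ is a projective isomorphism. In coordinates, $L_j\oplus L_k$ is a direct-sum decomposition of the underlying $(2n+2)$-dimensional vector space, since the planes are pairwise skew. Under this decomposition $f_{ijk}$ is the restriction to $L_i$ of the linear projection onto $L_j$ along $L_k$. This restriction is injective because $L_i\cap L_k=\emptyset$, so for dimension reasons it is an isomorphism. Hence $f_{ijk}$ is a bijection $L_i\to L_j$ induced by a linear isomorphism. Its inverse is $f_{jik}$: indeed, $\langle p,L_k\rangle=\langle f_{ijk}(p),L_k\rangle$, because $f_{ijk}(p)$ lies in that $(n+1)$-plane but not in $L_k$. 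So every morphism of $C_{\mathcal L}$ is a composition of projectivities, hence a projectivity.

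Next, fix $\varphi\in\Hom_{C_{\mathcal L}}(L_i,L_j)$ and write it as $\varphi=f_{i_{m-1}i_m k_m}\circ\cdots\circ f_{i_0i_1k_1}$ with $i_0=i$ and $i_m=j$. By Proposition~\ref{prop:lc-orbits}, $Z$ is closed under every $f_{abc}$, so $f_{abc}(Z\cap L_a)\subseteq Z\cap L_b$. Applying this along the chain gives $\varphi(Z\cap L_i)\subseteq Z\cap L_j$.

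For the reverse inclusion, I note that $\varphi^{-1}$ is again a morphism in $C_{\mathcal L}$, namely $f_{i_1i_0k_1}\circ\cdots\circ f_{i_mi_{m-1}k_m}$, using the inverse identity from the first step. The same argument then gives $\varphi^{-1}(Z\cap L_j)\subseteq Z\cap L_i$. Applying $\varphi$ to both sides, which is legitimate since $\varphi$ is a bijection, yields $Z\cap L_j\subseteq\varphi(Z\cap L_i)$, and hence equality.

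Projective equivalence of the two slices follows because $\Hom_{C_{\mathcal L}}(L_i,L_j)$ is nonempty: since $s\ge3$ we can choose $k\ne i,j$ and take $\varphi=f_{ijk}$. Identifying $L_i$ and $L_j$ with $\PP^n_K$, this $\varphi$ is a projectivity carrying one slice onto the other. The only step needing any care is the first one, namely that the $f_{ijk}$ are linear isomorphisms with $f_{jik}$ as inverse. That step should be routine via the direct-sum decomposition.
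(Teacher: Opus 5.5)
Your proof is correct and follows the same route as the paper. Closure of $Z$ under the generators $f_{abc}$ gives $\varphi(Z\cap L_i)\subseteq Z\cap L_j$, and the same argument applied to $\varphi^{-1}\in\Hom_{C_{\mathcal L}}(L_j,L_i)$ gives the reverse inclusion; beyond the paper, you also verify that each $f_{ijk}$ is a projectivity with inverse $f_{jik}$ (so $C_{\mathcal L}$ really is a groupoid) and that $\Hom_{C_{\mathcal L}}(L_i,L_j)\neq\emptyset$ because $s\ge 3$.
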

\begin{proof}
Since $Z$ is collinearly complete, it is closed under the generators $f_{abc}$, hence under every morphism in
$C_{\mathcal L}$. Therefore, if $\varphi\in \Hom_{C_{\mathcal L}}(L_i,L_j)$, then
$\varphi(Z\cap L_i)\subseteq Z\cap L_j$.

Since $C_{\mathcal L}$ is a groupoid, $\varphi$ is invertible and
$\varphi^{-1}\in \Hom_{C_{\mathcal L}}(L_j,L_i)$. Applying the same argument to $\varphi^{-1}$ gives
$\varphi^{-1}(Z\cap L_j)\subseteq Z\cap L_i$, hence $Z\cap L_j\subseteq \varphi(Z\cap L_i)$.

Thus $\varphi(Z\cap L_i)=Z\cap L_j$. Since every morphism in $C_{\mathcal L}$ is a projective isomorphism between
the corresponding $n$-planes, the two slices are projectively equivalent.
\end{proof}

For $n=1$, collinearly complete sets are characterized by \cite[Theorem~3.0.3]{politus3}. This result shows that finite collinearly complete sets with respect to $\mathcal L$
are exactly the finite geproci half-grids, and conversely. That is,  $Z\subseteq \PP^3$ is a collinearly complete set with respect to $\mathcal L$ if and only if its general projection to $\PP^2$ is a complete intersection of two curves one of which can be chosen to be the projection of the lines in $\mathcal L$.

\begin{question}\label{q:cc-projection}
What geometric property of general projections replaces the half-grid / geproci   picture for finite collinearly complete sets in
$\PP^{2n+1}_K$ when $n>1$?
\end{question}
We return to this question in Question~\ref{conj:geprofi}, after introducing the necessary algebraic framework.

\subsection{A matrix representation of the group of the groupoid}\label{sec:matrix and groupoid}
To make the group $G_{\mathcal L}$ computable, we now represent the
$n$-planes by matrices and express the maps $f_{ijk}$ in terms of matrix
arithmetic. 
Following the idea in \cite{f2025}, in order to describe the generators of the group of the groupoid, it is useful to identify the $n$-planes with matrices. We recall here this approach.  We identify $\PP^{2n+1}_K$ with $\PP(K^{2n+2})$, with homogeneous coordinates
$[x_0:\dots:x_n:y_0:\dots:y_n]$.
Write $V_0=K^{n+1}\oplus 0$ and $V_\infty=0\oplus K^{n+1}$, so that
$K^{2n+2}=V_0\oplus V_\infty$.
We fix the two $n$-planes $L_0=\PP(V_0)$ and $L_\infty=\PP(V_\infty)$ in $\PP(K^{2n+2})$.

There is a one-to-one correspondence between the matrices in $\GL_{n+1}(K)$ and the $n$-planes in $\PP^{2n+1}_K$ skew to both $L_0$ and $L_\infty$.
For $M\in \GL_{n+1}(K)$, let \[\Gamma(M)=\{(u,Mu):u\in K^{n+1}\}\subseteq V_0\oplus V_\infty,\] and set
$L_M=\PP(\Gamma(M))$.
We also fix $L_1=L_I$, the $n$-plane corresponding to the identity matrix.

The following criterion is the natural higher-dimensional analogue of the matrix description used in the line case in \cite[Lemma~2.3]{f2025}. Since the proof is the same linear-algebraic argument, we include it only for the reader's convenience.

\begin{lemma}
	\label{lem:skew-criterion}
	Let $M,N\in \GL_{n+1}(K)$. Then
	\[
	L_M\cap L_N=\emptyset
	\qquad\Longleftrightarrow\qquad
	M-N\in \GL_{n+1}(K).
	\]
\end{lemma}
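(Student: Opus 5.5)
The plan is to translate projective disjointness into a statement about the underlying linear subspaces. Two projective subspaces $\PP(A)$ and $\PP(B)$ of $\PP(K^{2n+2})$ are disjoint exactly when $A\cap B=0$ in $K^{2n+2}$. So the claim reduces to
\[
\Gamma(M)\cap\Gamma(N)=0 \iff \ker(M-N)=0,
\]
and since $M-N$ is a square matrix, having trivial kernel is the same as $M-N\in\GL_{n+1}(K)$.

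The key computation is to describe the intersection of the two graphs directly. A vector lies in $\Gamma(M)\cap\Gamma(N)$ if and only if it can be written as $(u,Mu)=(u',Nu')$ for some $u,u'\in K^{n+1}$. Comparing the first components forces $u=u'$. Comparing the second components then gives $Mu=Nu$, that is, $(M-N)u=0$. Conversely, every $u\in\ker(M-N)$ yields the vector $(u,Mu)$ lying in both graphs.

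It follows that the linear map $u\mapsto (u,Mu)$ restricts to an isomorphism
\[
\ker(M-N)\xrightarrow{\ \sim\ }\Gamma(M)\cap\Gamma(N).
\]
This map is injective because the first coordinate recovers $u$. Hence $\Gamma(M)\cap\Gamma(N)=0$ exactly when $\ker(M-N)=0$, which proves both directions at once.

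There is no real obstacle here. The only points to state explicitly are that $\PP(\cdot)$ of a zero subspace is empty, and that injectivity of a square matrix is equivalent to its invertibility. Both hold over an arbitrary field $K$, so the argument is characteristic-free, as the statement requires.
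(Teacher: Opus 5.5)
Your proof is correct and follows essentially the same argument as the paper: equate $(u,Mu)=(v,Nv)$, read off $u=v$ from the first components, and conclude $(M-N)u=0$. You make the converse direction and the isomorphism $\ker(M-N)\cong\Gamma(M)\cap\Gamma(N)$ explicit, which the paper leaves implicit.
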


\begin{proof}
	A point of $L_M\cap L_N$ is represented by a nonzero vector $(u,Mu)=(v,Nv)$
	in $V_0\oplus V_\infty$. Since the first components agree, we must have $u=v$, and hence $(M-N)u=0.$
	Therefore $L_M\cap L_N\neq \emptyset$ if and only if there exists a nonzero vector
	$u\in K^{n+1}$ such that $(M-N)u=0$, namely if and only if $M-N$ is singular.
\end{proof}

Assume from now on that
\[
\mathcal L=\{L_\infty,L_0,L_1,L_2,\dots,L_r\},
\]
where $L_1=L_I$ and, for $i\ge 2$, one has $L_i=L_{M_i}$ with $M_i\in \GL_{n+1}(K)$. We call such a configuration of pairwise skew $n$-planes a {\em normalized configuration.}

For notational convenience, set $M_0=0$, so that $L_0$ is the plane $\PP(V_0)$, while $L_\infty=\PP(V_\infty)$ remains distinguished and is not represented by a matrix.

Note that the next results exclude the index $\infty$. Indeed, the group $G_{\mathcal L}$ is generated by the actions with finite indices. The formulas involving $\infty$ will only be used in the proof of Corollary \ref{cor:generators-by-differences}.
\begin{lemma}[Cf. {\cite[Lemma 2.5]{f2025}}] \label{lem:formula-fijk}
Let $i,j,k\in \{0,\ldots,r \}$ be distinct indices.
Then, the map
$f_{ijk}:L_i\to L_j$ corresponds to the class
\[
[(M_j-M_k)^{-1}(M_i-M_k)]\in \PGL_{n+1}(K).
\]
\end{lemma}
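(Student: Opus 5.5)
We compute $f_{ijk}$ directly in the coordinates $V_0\oplus V_\infty$. Every $L_m$ with $m\in\{0,\dots,r\}$ is the projectivization of $\Gamma(M_m)=\{(u,M_mu)\}$; for $m=0$ this is $V_0=\Gamma(0)$. Each $L_m$ is therefore identified with $\PP(K^{n+1})$ via the first coordinate $u$, and a projective map $L_i\to L_j$ corresponds to a class in $\PGL_{n+1}(K)$ acting on these $u$-coordinates. We read the statement in this identification.

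Fix $p=[(u,M_iu)]\in L_i$. The $(n+1)$-plane $T=\langle p,L_k\rangle$ is the projectivization of
\[
K(u,M_iu)+\Gamma(M_k).
\]
We look for $w\in K^{n+1}$, $w\neq 0$, with $(w,M_jw)\in T$. That is, we want $\lambda\in K$ and $v\in K^{n+1}$ such that
\[
(w,M_jw)=\lambda(u,M_iu)+(v,M_kv).
\]
The first coordinate gives $v=w-\lambda u$. Substituting into the second gives
\[
M_jw=\lambda M_iu+M_kw-\lambda M_ku,
\]
that is,
\[
(M_j-M_k)w=\lambda(M_i-M_k)u.
\]
By Lemma~\ref{lem:skew-criterion}, pairwise skewness of $L_j$ and $L_k$ gives $M_j-M_k\in\GL_{n+1}(K)$. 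This also covers the cases where $j$ or $k$ is $0$, since $M_0=0$ and the criterion holds verbatim with $N=0$: then $L_N=\PP(V_0)$ and $\Gamma(0)=V_0$. Hence
\[
w=\lambda(M_j-M_k)^{-1}(M_i-M_k)u .
\]

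It remains to check that $\lambda\neq 0$ and that $w$ is projectively unique. If $\lambda=0$, then $w=0$, so the intersection point must come from $\lambda\neq0$. Since $M_i-M_k$ is also invertible, $w\neq 0$ whenever $\lambda\neq 0$. All choices of $\lambda\neq 0$ give the same point. This matches the uniqueness of $T\cap L_j$ noted in the definition of $f_{ijk}$, and we have in effect reproved it. Therefore
\[
f_{ijk}([u])=[(M_j-M_k)^{-1}(M_i-M_k)u]
\]
in the $u$-coordinates. The matrix is invertible, so its class in $\PGL_{n+1}(K)$ is well defined and represents $f_{ijk}$.

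There is no real obstacle here. The only points needing care are the identification convention, namely that each $L_m$ is parametrized by the $V_0$-component, and the degenerate index $0$, where $M_0=0$ must be allowed. The invertibility needed at each step comes from Lemma~\ref{lem:skew-criterion}.
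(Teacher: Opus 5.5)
Your proof is correct and is essentially the argument the paper relies on: the paper proves this lemma only by citing the coordinate computation of \cite[Lemma~2.5]{f2025}, transplanted from lines to $n$-planes, and that computation is what you carry out. You parametrize $L_i$ by $(u,M_iu)$, intersect $\langle p,L_k\rangle$ with $L_j$, reduce to $(M_j-M_k)w=\lambda(M_i-M_k)u$, and invert using Lemma~\ref{lem:skew-criterion}; your handling of the index $0$ via $M_0=0$ and of the $V_0$-coordinate identification matches the convention the paper uses later in Corollary~\ref{cor:generators-by-differences}.
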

\begin{proof}
     The proof  is identical to {\cite[Lemma 2.5]{f2025}}, replacing lines by $n$-spaces.
\end{proof}
\begin{theorem}\label{thm:generators}
With notation as above, the group $G_{\mathcal L}\subseteq \PGL_{n+1}(K)$ is generated by the projective classes
\[
[(M_j-M_k)^{-1}(M_i-M_k)]
\]
for distinct indices $i,j,k\in \{0,\ldots,r \}$.
\end{theorem}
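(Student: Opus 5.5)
The plan is to identify every $n$-plane of $\mathcal L$ with $\PP^n_K$ through fixed coordinates. Under these charts each morphism of $C_{\mathcal L}$ becomes an element of $\PGL_{n+1}(K)$, and composing morphisms becomes multiplying classes. For $L_i=L_{M_i}$ with $i\in\{0,\dots,r\}$, use the chart $[u]\mapsto[(u,M_iu)]$, i.e. projection to $V_0$. For $L_\infty$, use $[w]\mapsto[(0,w)]$. Lemma~\ref{lem:formula-fijk} already gives, in these charts, the class $[(M_j-M_k)^{-1}(M_i-M_k)]$ for finite indices.

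Next I compute the three types of maps involving $\infty$ by the same linear algebra. The point $(u,M_iu)$ together with $L_\infty$ spans the vectors $(u,\ast)$, which meet $\Gamma(M_j)$ in $(u,M_ju)$; hence $f_{ij\infty}=[I]$. The span of $(u,M_iu)$ and $\Gamma(M_k)$ meets $V_\infty$ where $v=-u$, giving $(0,(M_i-M_k)u)$; hence $f_{i\infty k}=[M_i-M_k]$. Finally, $(0,w)+(v,M_kv)\in\Gamma(M_j)$ forces $v=(M_j-M_k)^{-1}w$; hence $f_{\infty jk}=[(M_j-M_k)^{-1}]$.

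For the inclusion ``$\supseteq$'', take the base object $L_1$ and a finite generator $f_{ijk}$. The loop $f_{j1\infty}\circ f_{ijk}\circ f_{1i\infty}$ lies in $G_1(\mathcal L)$, with the convention that a factor $f_{aa\infty}$ is the identity when $a=1$. Since $f_{\ast\ast\infty}=[I]$ in our charts, this loop has class exactly $[(M_j-M_k)^{-1}(M_i-M_k)]$. So every listed class lies in $G_{\mathcal L}$.

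For ``$\subseteq$'', write an arbitrary loop at $L_1$ as a word in the $f_{abc}$. The letters not touching $\infty$ are either listed generators or identities $f_{ab\infty}$. The main obstacle is the excursions through $L_\infty$. Every such excursion is a consecutive pair $f_{\infty jk'}\circ f_{i\infty k}$, because the loop must enter and leave $L_\infty$.

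If $k=k'$, the pair is exactly $f_{ijk}$. If $k\neq k'$, I use $f_{\infty k'k}\circ f_{i\infty k}=f_{ik'k}$, which gives $M_i-M_k=(M_{k'}-M_k)(M_{k'}-M_k)^{-1}(M_i-M_k)$. Therefore
\[
(M_j-M_{k'})^{-1}(M_i-M_k)=-(M_j-M_{k'})^{-1}(M_k-M_{k'})\,(M_{k'}-M_k)^{-1}(M_i-M_k),
\]
which projectively is $f_{kjk'}\circ f_{ik'k}$. The degenerate cases need a quick check: if $i=k'$ or $j=k$, the corresponding factor is $[I]$, and all remaining triples consist of distinct finite indices. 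Substituting these expressions for every excursion writes the loop as a product of the listed classes, which completes the proof.
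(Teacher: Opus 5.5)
Your argument is correct and is essentially the proof the paper delegates to \cite[Corollary~2.7]{f2025}: in the natural charts you compute the maps through $L_\infty$ (your $f_{i\infty k}=[M_i-M_k]$ is the same formula the paper uses in Corollary~\ref{cor:generators-by-differences}), observe that $f_{ab\infty}=[I]$, and rewrite each excursion through $L_\infty$ as a product of finite-index classes, which is exactly what the one-line citation leaves to the reader. Two cosmetic points only: $f_{kjk'}\circ f_{ik'k}$ is a product of classes rather than a composable path in $C_{\mathcal L}$ (insert $f_{k'k\infty}=[I]$ between the two factors to make it one), and an excursion with $k=k'$ and $i=j$ gives the identity rather than a listed generator.
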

\begin{proof}
The proof is identical to that of \cite[Corollary~2.7]{f2025}, using Lemma~\ref{lem:formula-fijk}. 
\end{proof}

\begin{corollary}\label{cor:generators-by-differences}
In the normalized setting, the group $G_{\mathcal L}$ is generated by the projective classes of the differences
\[
[M_i-M_j]\in \PGL_{n+1}(K),
\qquad i\neq j\in \{0,\ldots, r\}.
\]
\end{corollary}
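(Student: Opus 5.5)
The plan is to prove the two inclusions between $G_{\mathcal L}$, realized as the endomorphism group $G_1(\mathcal L)$ of $L_1=L_I$, and the subgroup $H\subseteq\PGL_{n+1}(K)$ generated by the classes $[M_i-M_j]$. The second inclusion uses the distinguished plane $L_\infty$. Throughout, each finite plane $L_i=L_{M_i}$ ($i\in\{0,\dots,r\}$) is given coordinates by $u\mapsto[(u,M_iu)]$, and $L_\infty$ by $w\mapsto[(0,w)]$. These are the coordinates implicitly used in Lemma~\ref{lem:formula-fijk}.

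\emph{Step 1: $G_{\mathcal L}\subseteq H$.} By Theorem~\ref{thm:generators}, $G_{\mathcal L}$ is generated by the classes $[(M_j-M_k)^{-1}(M_i-M_k)]$ with $i,j,k$ distinct finite indices. Each such class is the product of $[M_i-M_k]$ with the inverse of $[M_j-M_k]$, so it lies in $H$.

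\emph{Step 2: formulas for maps involving $\infty$.} I will compute these directly from the definition of $f_{ijk}$, in the same way as Lemma~\ref{lem:formula-fijk}.
\begin{itemize}
\item For $p=(u,M_iu)$, the span $\langle p,L_\infty\rangle$ consists of all vectors $(\lambda u,\ast)$. It meets $L_j$ in $(u,M_ju)$, so $f_{ij\infty}$ is the identity in these coordinates.
\item For $f_{i\infty j}$, the span $\langle p,L_j\rangle$ consists of vectors $(u+v,M_iu+M_jv)$. Requiring the first component to vanish forces $v=-u$, which gives the point $(0,(M_i-M_j)u)$. So $f_{i\infty j}$ corresponds to $[M_i-M_j]$.
\item For $f_{\infty ij}$, a point $(v,M_jv+w)$ of $\langle (0,w),L_j\rangle$ lies on $L_i$ exactly when $w=(M_i-M_j)v$. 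So $f_{\infty ij}$ corresponds to $[(M_i-M_j)^{-1}]$.
\end{itemize}

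\emph{Step 3: $H\subseteq G_{\mathcal L}$.} For $i\neq j$ finite, consider the loop at $L_1$
\[
L_1\xrightarrow{f_{1i\infty}}L_i\xrightarrow{f_{i\infty j}}L_\infty\xrightarrow{f_{\infty 10}}L_1 .
\]
If $i=1$, the first map is omitted. By Step 2 this composite equals $[(M_1-M_0)^{-1}]\,[M_i-M_j]\,[I]=[M_i-M_j]$, using $M_1=I$ and $M_0=0$. Hence $[M_i-M_j]\in G_1(\mathcal L)$. Together with Step 1, this gives $G_{\mathcal L}=H$.

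\emph{Main obstacle.} The delicate point is bookkeeping rather than any real difficulty. Step 3 needs $G_{\mathcal L}$ to be exactly $G_1(\mathcal L)$, including loops through $L_\infty$, with the identification of $L_1$ with $\PP^n$ via $u$. It also needs the coordinates in Step 2 to be consistent with those of Theorem~\ref{thm:generators}. I will check explicitly that the chosen coordinates on $L_\infty$ make $f_{\infty10}$ the identity, since that is precisely where $M_1=I$ and $M_0=0$ enter.
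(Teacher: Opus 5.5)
Your proposal is correct and follows the paper's argument. In both, $G_{\mathcal L}\subseteq H$ comes from writing the generators of Theorem~\ref{thm:generators} as $[M_j-M_k]^{-1}[M_i-M_k]$, and the reverse inclusion from the fact that $f_{i\infty j}$ has class $[M_i-M_j]$. Your explicit loop $L_1\to L_i\to L_\infty\to L_1$, using that $f_{1i\infty}$ and $f_{\infty10}$ are the identity in these coordinates, makes precise the step the paper leaves implicit when it simply asserts that this class belongs to $G_{\mathcal L}$.
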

\begin{proof}
Although Theorem~\ref{thm:generators} shows that $G_{\mathcal L}$ is generated by the actions with finite indices, we may also consider the map $f_{i\infty k}.$ Exactly as in \cite[Theorem~2.6]{f2025}, its class is $[M_i-M_k].$ Hence the classes $[M_i-M_j]$ belong to $G_{\mathcal L},$ and they generate $G_{\mathcal L}.$
\end{proof}

\begin{remark}\label{rem:torsion-finite}
The analogue of \cite[Proposition~2.1.3(b)]{politus3} holds in the present setting as well:
the group $G_{\mathcal L}$ is finite if and only if every element of $G_{\mathcal L}$ has finite order, see \cite[Proposition 6.1]{ganger2024}. In particular, the group $G_{\mathcal L}$ is finitely generated.
\end{remark}

We now illustrate the previous results with two examples showing different behaviours of finite collinearly complete sets.

 In higher dimension, finite orbits of infinite groups need not lie on transversal lines (cf. \cite[Theorem~2.1.6]{politus3}); however, in the examples considered here they are still supported on special transversal linear subspaces.

\begin{example}\label{ex:infinite-group-finite-orbit}
Let $K=\mathbb Q(d)$, where $d$ is a primitive fifth root of unity, so that $d^5=1$ and
$d\neq 1$. Consider the normalized configuration
$\mathcal L=\{L_\infty,L_0,L_1,L_2\}\subset \PP^5_K,
$
where $L_1=L_I$ and $L_2=L_{M_2}$ with
\[
M_2=
\begin{pmatrix}
-d&0&0\\
0&-d^4&1\\
0&0&-d^4
\end{pmatrix}.
\]

Since $\det(M_2)\neq 0$, the plane $L_2$ is skew to both $L_0$ and $L_\infty$.
Moreover,
\[
M_2-I=
\begin{pmatrix}
-(1+d)&0&0\\
0&-(1+d^4)&1\\
0&0&-(1+d^4)
\end{pmatrix},
\]
and this matrix is also invertible, because $1+d\neq 0$ and $1+d^4\neq 0$.
Hence $L_2$ is also skew to $L_1=L_I$, so the four planes in $\mathcal L$ are pairwise skew.

By Corollary~\ref{cor:generators-by-differences}, the group $G_{\mathcal L}$ is generated by the classes
$[M_2]$ and $[M_2-I]$ in $\PGL_3(K)$.
We first show that $G_{\mathcal L}$ is infinite. Replacing $M_2$ by a scalar multiple, we may write
\[
[M_2]=
\begin{bmatrix}
d^2&0&0\\
0&1&-d\\
0&0&1
\end{bmatrix}.
\]
For every $m\ge 1$ one has
\[
[M_2^m]=
\begin{bmatrix}
d^{2m}&0&0\\
0&1&-md\\
0&0&1
\end{bmatrix}.
\]
Thus the classes $[M_2^m]$ are all distinct, so $[M_2]$ has infinite order.
Therefore $G_{\mathcal L}$ is infinite.

Now consider the point $v_0=[1:1:0]\in \PP^2_K$ and
\[
p_0=[0,v_0]=[0:0:0:1:1:0]\in L_\infty.
\]

For $[u:v:0]\in \PP^2_K$, the matrix $M_2$ acts by
\[
[u:v:0]\longmapsto [d^2u:v:0].
\]
Similarly, scaling $M_2-I$ by $-(1+d^4)^{-1}$ does not change its projective class, and gives
\[
[M_2-I]=
\begin{pmatrix}
\frac{1+d}{1+d^4}&0&0\\
0&1&-\frac{1}{1+d^4}\\
0&0&1
\end{pmatrix}.
\]
Since $(1+d)/(1+d^4)=d$, the class $[M_2-I]$ acts by
\[
[u:v:0]\longmapsto [du:v:0].
\]
Hence the subgroup generated by $[M_2]$ and $[M_2-I]$ acts on the points $[u:v:0]$ through the maps
\[
[u:v:0]\longmapsto [d^m u:v:0],\qquad m\in \mathbb Z.
\]
In particular,
\[
[p_0]_{\mathcal L}\cap L_\infty
=[p_0]_{G_{\mathcal L}}
=
\{[d^m:1:0]:m\in\mathbb Z\}
=
\{[1:1:0],[d:1:0],[d^2:1:0],[d^3:1:0],[d^4:1:0]\},
\]
which has cardinality $5$, and $|[p_0]_{\mathcal L}|=20$.

The set $[p_0]_{\mathcal L}$ is degenerate; indeed, it is contained in the projective 3-space defined by
$x_2=y_2=0,$ 
which meets each of the four planes in $\mathcal L$ in a line.
\end{example}
\begin{example}\label{ex:12-points-P5}
Let $K=\mathbb Q$. Consider the normalized configuration
$\mathcal L=\{L_\infty,L_0,L_1,L_2\}\subset \PP^5_K$,
where $L_1=L_I$ and $L_2=L_{M_2}$ with
\[
M_2=
\begin{pmatrix}
2&0&0\\
0&3&0\\
0&0&5
\end{pmatrix}.
\]

Since $\det(M_2)\neq 0$, the plane $L_2$ is skew to both $L_0$ and $L_\infty$.
Moreover,
$M_2-I$ 
is also invertible, so $L_2$ is skew to $L_1$ as well.

The common transversals to $L_\infty,L_0,L_1,L_2$ correspond to the fixed points of the projectivity induced by $M_2$ on $L_\infty.$ Since $M_2$ is diagonal with three distinct eigenvalues, it has exactly three eigenlines, namely $v_1=[1:0:0],$ $v_2=[0:1:0],$ and $v_3=[0:0:1].$ Hence there are exactly three lines meeting all four planes.

Let
\[
p_i=[0,v_i]\in L_\infty,\qquad i=1,2,3,
\]
and set
\[
Z=[p_1]_{\mathcal L}\cup [p_2]_{\mathcal L}\cup [p_3]_{\mathcal L}.
\]
Then $Z$ consists of $12$ points and has $h$-vector
\[
h_Z=(1,5,3,3).
\]
In particular, $Z$ is nondegenerate in $\PP^5$.

Moreover, the general projection of $Z$ to $\PP^4$ has $h$-vector
$(1,4,4,3).$

For each of the three transversal lines $\ell_1,\ell_2,\ell_3$, choose a general plane
$\Pi_1,\Pi_2,\Pi_3\subset \PP^5$ containing it.
Then $Z$ is contained in
\[
\Bigl(\bigcup_{i=0}^2 L_i\cup L_\infty\Bigr)\cap \Bigl(\Pi_1\cup \Pi_2\cup \Pi_3\Bigr).
\]
After a sufficiently general projection $\pi:\PP^5\dashrightarrow \PP^4$, the image $\pi(Z)$ is therefore contained in
\[
\Bigl(\pi(L_\infty)\cup \pi(L_0)\cup \pi(L_1)\cup \pi(L_2)\Bigr)\cap
\Bigl(\pi(\Pi_1)\cup \pi(\Pi_2)\cup \pi(\Pi_3)\Bigr),
\]
which has no common positive-dimensional component in $\PP^4$, that is, the intersection of a union of four planes with a union of three planes in $\PP^4$ is proper and has degree $4\cdot 3=12.$
Since  $ |\pi(Z)|=12$ it follows that $\pi(Z)$ coincides with this intersection. 
\end{example}

The two examples above illustrate a common feature: the orbit is supported
on a linear subspace that meets each plane of $\mathcal L$ in equal
dimension. The following lemma characterizes exactly when such a subspace
exists.

\begin{lemma}\label{lem:transversal-subspace}
Let $\mathcal L=\{L_\infty,L_0,L_1,L_M\}\subset \PP^{2n+1}_K$ be a normalized configuration of $n$-planes, for some $M\in \GL_{n+1}(K)$.
Fix an integer $d$ with $1\le d\le n$.
Then the following are equivalent:

\begin{enumerate}
\item there exists a projective subspace $\Sigma\subset \PP^{2n+1}_K$ of dimension $2d-1$ intersecting each $n$-plane in $\mathcal L$ in a linear space of projective dimension $d-1$;

\item there exists a $d$-dimensional subspace $E\le K^{n+1}$ such that $M(E)\subseteq E$.
\end{enumerate}

In this case, there is a unique $d$-dimensional subspace $E\le K^{n+1}$ such that $\Sigma=\PP(E\oplus E).$ \end{lemma}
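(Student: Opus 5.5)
Write $\Sigma=\PP(W)$ with $W\le K^{2n+2}=V_0\oplus V_\infty$ of dimension $2d$. The approach is to read off the intersections $W\cap V_0$, $W\cap V_\infty$, $W\cap\Gamma(I)$, $W\cap\Gamma(M)$ as $d$-dimensional subspaces and translate the conditions into linear algebra.

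For (1)$\Rightarrow$(2), set $A=W\cap V_0$ and $B=W\cap V_\infty$. Each is a $d$-dimensional subspace, $A=E_0\oplus 0$ and $B=0\oplus E_\infty$ with $E_0,E_\infty\le K^{n+1}$ of dimension $d$. They intersect trivially and $\dim W=2d$, so $W=A\oplus B=E_0\oplus E_\infty$. Next, $W\cap\Gamma(I)$ is $d$-dimensional and consists of vectors $(u,u)$ with $u\in E_0\cap E_\infty$. Hence $\dim(E_0\cap E_\infty)\ge d$, which forces $E_0=E_\infty=:E$ and $W=E\oplus E$. Finally, $W\cap\Gamma(M)=\{(u,Mu):u\in E,\ Mu\in E\}$ has dimension $d$, and the map $u\mapsto(u,Mu)$ is injective. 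So $\{u\in E: Mu\in E\}=E$, that is, $M(E)\subseteq E$; since $M$ is invertible, in fact $M(E)=E$. Uniqueness of $E$ is immediate, because $E$ is recovered as the projection of $W\cap V_0$ to the first factor.

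For (2)$\Rightarrow$(1), put $\Sigma=\PP(E\oplus E)$, which has projective dimension $2d-1$. Then $(E\oplus E)\cap V_0=E\oplus0$ and $(E\oplus E)\cap V_\infty=0\oplus E$, each of projective dimension $d-1$. For any $N\in\{I,M\}$, the same computation as above gives $(E\oplus E)\cap\Gamma(N)=\{(u,Nu):u\in E\cap N^{-1}E\}$. This equals $\Gamma(N|_E)$, which has dimension $d$ because $N(E)\subseteq E$; in particular $\dim(E\cap N^{-1}E)=d$.

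The only delicate point is in (1)$\Rightarrow$(2). One must make sure that "intersecting in a linear space of projective dimension $d-1$" is used as an exact dimension rather than an upper bound. One must also check that $A\oplus B$ exhausts $W$, which follows from the dimension count $d+d=2d$ together with skewness of $L_0$ and $L_\infty$. Everything else is direct linear algebra in the spirit of Lemma~\ref{lem:skew-criterion}.
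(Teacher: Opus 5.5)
Your proof is correct and follows essentially the same route as the paper: you decompose $W=(W\cap V_0)\oplus(W\cap V_\infty)=E_0\oplus E_\infty$, use the intersection with $\Gamma(I)$ to force $E_0=E_\infty$, read off $M(E)\subseteq E$ from the intersection with $\Gamma(M)$, and check the converse by direct computation. Your remark that $E$ is recovered from $W\cap V_0$ makes explicit the uniqueness clause that the paper leaves implicit.
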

\begin{proof}
Assume first that there exists a projective subspace $\Sigma=\PP(W)\subset \PP(K^{n+1}\oplus K^{n+1})$
of dimension $2d-1$ such that each of $\Sigma\cap L_0$ and $\Sigma\cap L_\infty$ has dimension $d-1$.
Then $\dim W=2d$, while
$\dim(W\cap (K^{n+1}\oplus 0))=d$ and $\dim(W\cap (0\oplus K^{n+1}))=d$.

Since these two subspaces intersect trivially and their dimensions sum
to $2d = \dim W$, their direct sum equals $W$.
 
  Hence
\[W=(E_0\oplus 0)\oplus (0\oplus E_\infty)\]
for some $d$-dimensional subspaces $E_0,E_\infty\le K^{n+1}.$

Now $\Sigma\cap L_1$ has dimension $d-1$, so $\dim(W\cap \Gamma(I))=d$.
Since $\Gamma(I)=\{(u,u):u\in K^{n+1}\}$, one has
$W\cap \Gamma(I)=\{(u,u):u\in E_0\cap E_\infty\}$.
Therefore $\dim(E_0\cap E_\infty)=d$, hence $E_0=E_\infty$.
Write this common subspace as $E$.
Then $W=E\oplus E$.

Finally, $\Sigma\cap L_M$ has dimension $d-1$, so $\dim((E\oplus E)\cap \Gamma(M))=d$.
But $(E\oplus E)\cap \Gamma(M)=\{(u,Mu):u\in E,\ Mu\in E\}$, so this has dimension $d$
if and only if $Mu\in E$ for every $u\in E$, that is, if and only if $M(E)\subseteq E$.

Conversely, suppose that there exists a $d$-dimensional subspace $E\le K^{n+1}$ with $M(E)\subseteq E$,
and let $\Sigma=\PP(E\oplus E)$.
Then $\dim(E\oplus E)=2d$, so $\Sigma$ has dimension $2d-1$.
Then $\Sigma \cap L_0 = \PP(E \oplus 0)$, $\Sigma \cap L_\infty = \PP(0 \oplus E)$,  $\Sigma\cap L_1=\PP\{(u,u):u\in E\},$ and 
$\Sigma\cap L_M=\PP\{(u,Mu):u\in E\}.$

Since $M(E)\subseteq E$, all these intersections are projective subspaces of dimension $d-1$. This proves~(1).
\end{proof}

Thus invariant subspaces of the matrices defining the configuration
naturally produce auxiliary linear spaces meeting the planes of
$\mathcal L$ in equal dimension.

The examples above suggest that finite collinearly complete sets may
arise as intersections of two varieties after a suitable projection.
This motivates the following definition, introduced in \cite{politus4}.

\begin{definition}[Special case of Definition 1.2 in \cite{politus4}]\label{def:geprofi} Let $Z$ be a finite set of points in $\PP^{2n+1}_K$. We say that $Z$ is $(a,b)$-geprofi if a general projection of $Z$ to $\PP^{2n}_K$ is the proper intersection of two n-dimensional varieties of degrees $a$ and $b.$
\end{definition}

\begin{question}\label{conj:geprofi}
Let $\mathcal L$ be a set of $r$ pairwise skew $n$-planes in $\PP^{2n+1}_K$ such that
$G_{\mathcal L}$ is finite, and let $Z$ be a finite union of $G_{\mathcal L}$-orbits.
Assume that $|Z\cap L|=s$ for every $L\in\mathcal L$.
Is $Z$ an $(s,r)$-geprofi set?
More precisely, if $Z$ is a finite collinearly complete set for $\mathcal L$, is it true that after a general projection the image of $Z$ can be realized as the intersection of the image of the union of the $n$-planes in $\mathcal L$ with a second variety of degree $s$?
\end{question}
In the next section we study this question in the case where $G_{\mathcal L}$
is finite cyclic. We prove that general orbits are contained in distinguished
projective $3$-spaces and are geproci inside them, obtaining a set-theoretic
intersection result for finite unions of general orbits. We also prove the
corresponding geprofi statement for $n\ge 3$. The case $n=2$ remains open in
general, although it holds in some special examples.

\section{The group of the groupoid in characteristic 0}\label{sec:char0}
In this section we study the case in which $G_{\mathcal L}$ is a finite cyclic group over $\mathbb C$.
This is the first finite case that can be analyzed completely.
Our main goal is to show that, although the configuration lives in
$\PP^{2n+1}_{\mathbb C}$, the finite cyclic case forces a strong linear degeneracy:
for a general point $p$ on one of the $n$-planes of $\mathcal L$, and for
every $L\in\mathcal L$, the set
$[p]_{\mathcal L}\cap L$
is contained in a line in $L$.
Equivalently, each slice of the orbit spans a projective subspace of dimension
at most one.
As a consequence, each general orbit is naturally contained in a distinguished
projective $3$-space, where the geometry reduces to the classical half-grid geproci picture in $\PP^3_{\mathbb C}$.
\subsection{Cyclic groups in characteristic 0} 
We begin with a basic finite cyclic example, which already illustrates the main phenomenon of this section: although the configuration lives in $\PP^{2n+1}_{\mathbb C}$, the orbit of a general point is forced to become collinear on each member of the configuration.

\begin{example}\label{ex:D4 in P^5}Let $\varepsilon$ be a third root of unity and consider 
\[M=\begin{pmatrix}
-\varepsilon^2&0&0\\
0&-\varepsilon^2&0\\
0&0&-\varepsilon\\
\end{pmatrix}
\in \mathrm{GL}_3(\mathbb C). \]

    The set $\mathcal L=\{L_0, L_{\infty}, L_I, L_M\}$ defines the group $G_{\mathcal L}=\langle [M]\rangle \cong C_3.$

Via the eigenspace decomposition, the group determines on each plane of $\mathcal L$ a distinguished fixed point and a distinguished fixed line. For example on $L_{\infty}$ it fixes the point $[0:0:1]$ and all the points on the line $\langle [1:0:0], [0:1:0]\rangle$. Moreover, note that the image of a point $[a:b:c]$ under the action of $[M]$ is $[a:b:\varepsilon c]$. Therefore the orbit of a point $p$ restricted to $L_{\infty}$ lies on the line $\langle p, [0:0:1] \rangle.$

Let $p_1,\ldots,p_r$ be general points on $L_\infty$, and write
\[
[p_1,\ldots,p_r]_{\mathcal L}=[p_1]_{\mathcal L}\cup\cdots\cup [p_r]_{\mathcal L}.
\]
For each $p_i$, there exists a projective $3$-space $\Lambda_i$ containing the orbit $[p_i]_{\mathcal L}$ and meeting
$L_\infty$ in the line $\langle p_i,[0:0:1]\rangle$.

For a general point $p_i\in L_\infty,$ the orbit $[p_i]_{\mathcal L}$ is a set of 12 points isomorphic to the $D_4$ configuration, see \cite[Chapter 3]{politus1}. Thus $[p_i]_{\mathcal L}$ is contained in a cubic surface in $\Lambda_i,$ namely a cone whose vertex is a general point of $\Lambda_i.$ 
Hence $[p_1,\ldots,p_r]_{\mathcal L}$ is the finite set-theoretic intersection of the union of the four planes in $\mathcal L$ with a nondegenerate surface of degree $3r$ in $\PP^5_{\mathbb C}.$
Therefore the general projection of $[p_1,\ldots,p_r]_{\mathcal L}$ is a proper intersection of the projection of these two varieties.

We computed the h-vectors of projection of the set $[p_1,\ldots,p_r]_{\mathcal L}$  for small $r$ using random points. 
\[
\begin{array}{l|l|l|l}
  Z   &  h_Z &  \pi: \PP^5 \dashrightarrow \PP^4& \pi: \PP^5 \dashrightarrow \PP^3\\
     \hline
 \  [p_1]_{\mathcal L}  &(1,3,6,\ 2) &&\\
 \  [p_1,p_2]_{\mathcal L}  &(1, 5, 11,\ 7) & (1, 4, 9,\ 10)&(1, 3, 6, 10,\ 4)\\
 \  [p_1,p_2,p_3]_{\mathcal L}  &(1, 5, 14, 12,\ 4)&(1, 4, 10, 16,\ 5)&(1, 3, 6, 10, 14,\ 2)\\
  \ [p_1,\ldots,p_4]_{\mathcal L}  &(1, 5, 14, 16,\ 8,\ 4) & (1, 4, 10, 18, 11,\ 4)&(1, 3, 6, 10, 14, 14)\\
\ [p_1,\ldots,p_5]_{\mathcal L}  &(1, 5, 14, 20, 12,\ 8) &(1, 4, 10, 19, 17,\ 9)&(1, 3, 6, 10, 14, 18,\ 8)\\
\end{array}
\]
Moreover, the general projection to $\PP^2$ seems to have generic Hilbert function for $r\ge 2$.
\end{example}

We begin with a general linear-algebra observation on finite cyclic projective actions. It will later be applied to the action induced on the $n$-planes of a normalized configuration.

\begin{lemma}\label{lem:general-cyclic-orbit-span}
Let $G$ be a finite cyclic subgroup of $\PGL_{n+1}(\mathbb C)$, and let $g$ be a generator.
Choose a lift $A\in \GL_{n+1}(\mathbb C)$ of $g$.
Let
\[
\mathbb C^{n+1}=E_1\oplus\cdots\oplus E_t
\]
be the decomposition into eigenspaces corresponding to the distinct eigenvalues of $A$.

Then, for a general point $p=[v]\in \PP^n_{\mathbb C}$, writing $v=v_1+\cdots+v_t$ with
$v_j\in E_j$ and $v_j\neq 0$ for every $j$, the orbit $G\cdot p$ spans the projective subspace
\[
\PP(\langle v_1,\dots,v_t\rangle)\simeq \PP^{t-1}_{\mathbb C}.
\]
In particular, a general orbit under $G$ spans a line if and only if the action has exactly two distinct eigenvalues.
\end{lemma}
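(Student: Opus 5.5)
The plan is to diagonalize and then reduce to a Vandermonde computation. Since $g$ has finite order $m$, we have $A^m=\lambda I$ for some scalar $\lambda$. So $A$ is annihilated by $x^m-\lambda$, which has distinct roots over $\mathbb C$ (when $\lambda\neq 0$, which holds because $A$ is invertible). Hence $A$ is diagonalizable, and $\mathbb C^{n+1}=E_1\oplus\cdots\oplus E_t$ as stated. Let $\lambda_1,\dots,\lambda_t$ be the distinct eigenvalues. A general $v$ has all components $v_j\neq 0$, because the locus where some $v_j=0$ is a finite union of proper linear subspaces. The orbit of $p=[v]$ is $\{[A^k v]:0\le k<m\}$, with
\[
A^k v=\sum_{j=1}^t \lambda_j^k v_j .
\]

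For the inclusion, every $A^kv$ lies in $W:=\langle v_1,\dots,v_t\rangle$. Since the $v_j$ are nonzero and lie in distinct summands of a direct sum, they are linearly independent, so $\dim W=t$ and $\PP(W)\simeq\PP^{t-1}_{\mathbb C}$. For the reverse inclusion, the orbit points $A^0v,\dots,A^{t-1}v$ are expressed in the basis $v_1,\dots,v_t$ by the Vandermonde matrix $(\lambda_j^k)$. This matrix is invertible because the $\lambda_j$ are distinct. Therefore these $t$ vectors span $W$, and the orbit spans exactly $\PP(W)$.

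One point needs care: we must check that $t\le m$, so that $A^0,\dots,A^{t-1}$ are genuinely available among the orbit elements. This holds because each $\lambda_j$ is a root of $x^m-\lambda$, which has only $m$ roots. Alternatively, one can avoid the issue entirely by working with all powers $A^k$ for $k\ge 0$, since the orbit is the same set of points.

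The final clause then follows immediately. The orbit spans a line exactly when $t-1=1$, that is, when $A$ has exactly two distinct eigenvalues. This property does not depend on the choice of lift, since rescaling $A$ rescales all eigenvalues uniformly. There is no real obstacle in this argument; the only subtle step is the diagonalizability of $A$, which uses characteristic $0$ (or, more generally, a characteristic not dividing $m$).
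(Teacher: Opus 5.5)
Your proposal is correct and follows essentially the same route as the paper. Both arguments get diagonalizability from $A^m=\lambda I$ with $x^m-\lambda$ separable, then show the orbit lies in $\PP(\langle v_1,\dots,v_t\rangle)$, and then use the Vandermonde matrix in the distinct eigenvalues for the reverse inclusion. Your additional remarks are correct refinements that the paper leaves implicit: that $t\le m$, that genericity means avoiding the proper subspaces $\bigoplus_{i\neq j}E_i$, and that outside characteristic $0$ only diagonalizability of $A$ is needed (the latter is exactly how the paper later applies the lemma to Singer cycles).
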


\begin{proof}
Since $[A]$ has finite order in $\PGL_{n+1}(\mathbb C)$, there exist
$s\geq 1$ and $\lambda\in\mathbb C^*$ such that
\[
A^s=\lambda I.
\]
Hence the minimal polynomial of $A$ divides $x^s-\lambda$, which has
distinct roots over $\mathbb C$. Therefore $A$ is diagonalizable.

For every integer $m\ge 0$, the point $g^m(p)$ is represented by
\[
A^m v=\lambda_1^m v_1+\cdots+\lambda_t^m v_t,
\]
where $\lambda_1,\dots,\lambda_t$ are the distinct eigenvalues of $A$.
Hence the orbit is contained in the projectivization of the vector space
$\langle v_1,\dots,v_t\rangle$.

Conversely, let $v=v_1+\cdots+v_t$, where $v_i$ is a nonzero vector in the eigenspace corresponding to $\lambda_i$ for each $i=1,\dots,t.$ Since the vectors $v_1,\dots,v_t$ belong to distinct eigenspaces, they are linearly independent. Relative to the basis $v_1,\dots,v_t,$ the vectors $A^m v$ for $m=0,\dots,t-1$ have coordinates $(\lambda_1^m,\dots,\lambda_t^m),$ hence they form a Vandermonde matrix in the eigenvalues $\lambda_1,\dots,\lambda_t.$ Since these eigenvalues are pairwise distinct, this matrix is invertible, and therefore the vectors $A^m v$ span the same vector space as $v_1,\dots,v_t$. Thus the orbit of $[v]$ spans the projective subspace $\PP(\langle v_1,\dots,v_t\rangle)$, which has dimension $t-1$.
\end{proof}

\begin{lemma}\label{lem:simultaneously-diagonalizable}Let $M_1,\dots,M_r\in \GL_{n+1}(\mathbb C)$, and assume that the projective classes
\[[M_1],\dots,[M_r]\in \PGL_{n+1}(\mathbb C)\] all belong to a finite cyclic subgroup.
Then the matrices $M_1,\dots,M_r$ are simultaneously diagonalizable over $\mathbb C$.
\end{lemma}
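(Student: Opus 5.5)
The plan is to reduce to a single diagonalizable matrix and then transport its eigenspace decomposition back to each $M_i$. Let $G=\langle g\rangle$ be the finite cyclic subgroup of $\PGL_{n+1}(\mathbb C)$ containing all the classes $[M_i]$, of order $s$, and choose a lift $A\in\GL_{n+1}(\mathbb C)$ of $g$. As in the proof of Lemma~\ref{lem:general-cyclic-orbit-span}, $A^s=\lambda I$ for some $\lambda\in\mathbb C^*$. The minimal polynomial of $A$ therefore divides $x^s-\lambda$, which has distinct roots in characteristic $0$, so $A$ is diagonalizable. Fix $P\in\GL_{n+1}(\mathbb C)$ with $P^{-1}AP=D$ diagonal.

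Next, since $[M_i]\in G$, there are integers $m_i\ge 0$ and scalars $c_i\in\mathbb C^*$ with $[M_i]=[A^{m_i}]$, that is, $M_i=c_iA^{m_i}$. This is the only point where the cyclicity hypothesis enters: every $M_i$ is a scalar multiple of a power of one fixed matrix. It follows that
\[
P^{-1}M_iP=c_iD^{m_i},
\]
which is diagonal for every $i$. Hence the same $P$ diagonalizes all of $M_1,\dots,M_r$ simultaneously.

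There is no real obstacle here. The only step needing care is the diagonalizability of a lift of a finite-order projective class. That step is exactly where characteristic $0$ (or more generally $\operatorname{char}K\nmid s$) is used, since it makes $x^s-\lambda$ separable. I would state it explicitly, because the paper later shows this rigidity fails in characteristic $2$.

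Alternatively, one could argue that the $M_i$ pairwise commute, each being a scalar multiple of a power of $A$, and that each is diagonalizable, because a finite-order projective class forces $M_i^{k}$ to be scalar for some $k\ge 1$. The standard theorem on commuting diagonalizable matrices then gives the result. The direct argument above is shorter, so I would use it.
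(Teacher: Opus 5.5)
Your argument is correct and is essentially the paper's proof. Both write $M_i=c_iA^{m_i}$ for a lift $A$ of the generator and use $A^s=\lambda I$ to conclude that $A$ is diagonalizable, so a single conjugating matrix diagonalizes every $M_i$. The paper rescales to $B=\mu^{-1}A$ with $B^s=I$, while you invoke the separability of $x^s-\lambda$ directly.

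One caveat on your closing aside. The characteristic-$2$ examples in the paper do not violate this lemma, because the relevant orders there ($7$, and more generally $q^2+q+1$) are prime to $p$, so the matrices remain diagonalizable. What actually breaks in characteristic $2$ is the two-eigenvalue argument of Lemma~\ref{lem:two-eigenvalues}.
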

\begin{proof}
Let $\langle [A]\rangle$ be the finite cyclic subgroup containing all the classes $[M_i]$.
Then for each $i$ there exist $a_i\in \mathbb Z$ and $c_i\in \mathbb C^*$ such that
$M_i=c_iA^{a_i}$.

Since $[A]$ has finite order in $\PGL_{n+1}(\mathbb C)$, there exist $s\ge 1$ and
$\lambda\in \mathbb C^*$ such that $A^s=\lambda I$.
Choose $\mu\in \mathbb C^*$ with $\mu^s=\lambda$, and set $B=\mu^{-1}A$.
Then $B^s=I$, so $B$ is diagonalizable over $\mathbb C$.

Now $A=\mu B$, hence $M_i=c_i\mu^{a_i}B^{a_i}$ for every $i$.
Thus each $M_i$ is a scalar multiple of a power of the diagonalizable matrix $B$, and therefore
the matrices $M_1,\dots,M_r$ are simultaneously diagonalizable.
\end{proof}

The next lemma shows that, in the normalized $4$-plane case, finiteness
of the cyclic group forces the defining matrix to have at most two distinct
eigenvalues.

\begin{lemma}\label{lem:two-eigenvalues}
Let $\mathcal L=\{L_\infty,L_0,L_1,L_M\}\subset \PP^{2n+1}_{\mathbb C}$ be a normalized configuration, and assume that
$G_{\mathcal L}$ is a finite cyclic group.
Then $M$ has at most two distinct eigenvalues.
\end{lemma}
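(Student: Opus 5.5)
The plan is to use only the two elements $[M]$ and $[M-I]$ of $G_{\mathcal L}$ together with the structure of a finite cyclic subgroup of $\PGL_{n+1}(\mathbb C)$, and to conclude by an elementary argument about absolute values.

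First, by Corollary~\ref{cor:generators-by-differences} (taking the differences $M-0$ and $M-I$), both $[M]$ and $[M-I]$ lie in $G_{\mathcal L}$. Let $[A]$ be a generator of the finite cyclic group $G_{\mathcal L}$. As in the proof of Lemma~\ref{lem:simultaneously-diagonalizable}, there are $s\ge 1$ and $\mu\in\mathbb C^*$ such that $B=\mu^{-1}A$ satisfies $B^s=I$. Hence $B$ is diagonalizable, and all of its eigenvalues are $s$-th roots of unity. We can then write
\[
M=c\,B^{a},\qquad M-I=c'\,B^{b}
\]
for some $a,b\in\mathbb Z$ and $c,c'\in\mathbb C^*$, absorbing the powers of $\mu$ into the constants. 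In particular, $M$ and $M-I$ are simultaneously diagonalizable in an eigenbasis $e_1,\dots,e_{n+1}$ of $B$; Lemma~\ref{lem:simultaneously-diagonalizable} already gives this.

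Next, let $\zeta_i$ be the root of unity with $Be_i=\zeta_i e_i$, and let $\lambda_i$ be the eigenvalue of $M$ on $e_i$. Evaluating the two identities above on $e_i$ gives
\[
\lambda_i=c\,\zeta_i^{a},\qquad \lambda_i-1=c'\,\zeta_i^{b}.
\]
Since $|\zeta_i|=1$, this gives $|\lambda_i|=|c|$ and $|\lambda_i-1|=|c'|$ for every $i$. So every eigenvalue of $M$ lies on both the circle of radius $|c|$ centered at $0$ and the circle of radius $|c'|$ centered at $1$ in $\mathbb C$.

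Finally, these two circles have distinct centers, so they are distinct and meet in at most two points. Hence $M$ has at most two distinct eigenvalues. I do not expect a real obstacle here. The only point needing care is that a single lift $B$ with $B^s=I$ works simultaneously for both $[M]$ and $[M-I]$; this is exactly where cyclicity is used, since it guarantees that both classes are powers of the same $[A]$. Finiteness enters only in ensuring that $B$ has eigenvalues of modulus one.
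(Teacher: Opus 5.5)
Your proof is correct and is essentially the paper's argument: every eigenvalue of $M$ lies both on a circle centered at $0$ and on a circle centered at $1$, and two such circles meet in at most two points.

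The only difference is how you get the equal moduli. You pass through a common lift $B$ with $B^s=I$, which uses cyclicity. The paper uses only that $[M]$ and $[M-I]$ each have finite order: the ratios of eigenvalues are then roots of unity, and the eigenvalues of $M-I$ are always $\lambda_i-1$. So, contrary to your closing remark, cyclicity is not actually needed for this lemma.
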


\begin{proof}
Since $G_{\mathcal L}$ is finite cyclic, both $[M]$ and $[M-I]$ have finite order in
$\PGL_{n+1}(\mathbb C)$.
Let $\lambda_1,\dots,\lambda_{n+1}$ be the eigenvalues of $M$.
Because $[M]$ has finite order, all ratios $\lambda_i/\lambda_j$ are roots of unity, hence
$|\lambda_i|=|\lambda_j|$ for all $i,j$. Thus the eigenvalues of $M$ all lie on a circle centered at $0$.
Likewise, since $[M-I]$ has finite order, all ratios $(\lambda_i-1)/(\lambda_j-1)$ are roots of unity, hence
$|\lambda_i-1|=|\lambda_j-1|$ for all $i,j$. Thus the eigenvalues also lie on a circle centered at $1$.

These two circles are distinct, since they have different centers, and therefore they meet in at most two points. Hence the set $\{\lambda_1,\dots,\lambda_{n+1}\}$ contains at most two distinct elements.
\end{proof}

The following lemma extends this eigenvalue constraint to the full
configuration: not only does each matrix have at most two distinct
eigenvalues, but all matrices share a common two-block decomposition.

\begin{lemma}\label{lem:block-compatibility}
Assume that
\[
\mathcal L=\{L_\infty,L_0,L_1,L_2,\dots,L_r\}
\]
is a normalized configuration of pairwise skew $n$-planes in
$\PP^{2n+1}_{\mathbb C}$, and assume that $G_{\mathcal L}$ is finite
cyclic. Suppose moreover that, after a change of coordinates, the matrices
$M_2,\dots,M_r$ are diagonal.
Write
\[
M_j=\operatorname{diag}(m_{j1},\dots,m_{j,n+1})
\qquad\text{for } j=2,\dots,r.
\]
Then, for every $j\geq 2$, the diagonal entries of $M_j$ take at most two distinct values.

Moreover,  there is a partition of $\{1,\ldots,n+1\}=U\sqcup W$ such that the matrix $M_j$ is constant on each block; that is,
\[
m_{ju}=m_{ju'}\quad\text{for all }u,u'\in U,
\]
and
\[
m_{jv}=m_{jv'}\quad\text{for all }v,v'\in W.
\]
\end{lemma}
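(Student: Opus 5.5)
The first assertion reduces to Lemma~\ref{lem:two-eigenvalues} applied to subconfigurations; the second needs a short case analysis on the common refinement of the partitions.

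\emph{First assertion.} Fix $j\ge 2$ and consider the subconfiguration $\mathcal L_j=\{L_\infty,L_0,L_1,L_j\}$. By Corollary~\ref{cor:generators-by-differences}, the group $G_{\mathcal L_j}$ is generated by classes of differences of matrices indexed inside $\mathcal L_j$. Each such class also lies in $G_{\mathcal L}$. So $G_{\mathcal L_j}$ is a subgroup of the finite cyclic group $G_{\mathcal L}$, hence finite cyclic. Lemma~\ref{lem:two-eigenvalues} then shows that $M_j$ has at most two distinct eigenvalues, i.e.\ its diagonal entries take at most two values.

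More generally I will use the following observation. For any four members of $\mathcal L$ with indices $a,b,c,d$, we may change coordinates so that the four planes become $L_\infty,L_0,L_I,L_N$ for a suitable $N$. The resulting group is a subgroup of $G_{\mathcal L}$, hence finite cyclic, so $N$ has at most two eigenvalues. The change of coordinates is a block-diagonal change of coordinates of $V_0\oplus V_\infty$, followed by a translation of the form $(u,w)\mapsto(u,w-M_cu)$. The relevant cases are:
\begin{itemize}
\item the quadruple $\{L_\infty,L_0,L_j,L_k\}$ gives $N=M_j^{-1}M_k$;
\item the quadruple $\{L_\infty,L_1,L_j,L_k\}$ gives $N=(M_j-I)^{-1}(M_k-I)$.
\end{itemize}
The mechanism in both cases is the same as in Lemma~\ref{lem:two-eigenvalues}: ratios of eigenvalues of elements of $G_{\mathcal L}$ must be roots of unity. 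Since all these matrices are diagonal, their eigenvalues can be read entrywise.

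\emph{Second assertion.} Attach to each coordinate $t$ the tuple $(m_{2t},\dots,m_{rt})$. It suffices to show that at most two distinct tuples occur; then $U,W$ are the two classes (with $W=\emptyset$ allowed). Suppose instead that coordinates $1,2,3$ carry pairwise distinct tuples. Since each $M_j$ takes at most two values, I pick $j$ with $m_{j1}\neq m_{j2}$ and $k$ with $m_{k2}\ne m_{k3}$. After relabeling the three coordinates, the entries of $(M_j,M_k)$ on coordinates $1,2,3$ become $(a,c),(b,c),(b,d)$ with $a\ne b$ and $c\ne d$.

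\begin{itemize}
\item The first quadruple forces the three values $c/a,\ c/b,\ d/b$ to take at most two values. Since $c/a\ne c/b$ and $c/b\ne d/b$, we get $c/a=d/b=:t$.
\item The second quadruple gives $(c-1)/(a-1)=(d-1)/(b-1)$. Substituting $c=ta$ and $d=tb$ yields $(t-1)(a-b)=0$, so $t=1$ and $c=a$.
\item Then the entry of $M_k-M_j$ in coordinate $1$ is $c-a=0$. So $M_k-M_j$ is singular, and Lemma~\ref{lem:skew-criterion} contradicts the skewness of $L_j$ and $L_k$.
\end{itemize}

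The main obstacle is the renormalization step: I must check carefully that the change of coordinates sending a quadruple to normalized form produces exactly the matrices $M_j^{-1}M_k$ and $(M_j-I)^{-1}(M_k-I)$. I also need the resulting group to be conjugate to a subgroup of $G_{\mathcal L}$, so that it is still finite cyclic and Lemma~\ref{lem:two-eigenvalues} applies. A fallback that avoids renormalization is to argue directly with $|\cdot|$ on the diagonal entries of the elements $[M_j^{-1}M_k]$ and $[(M_j-I)^{-1}(M_k-I)]$ of $G_{\mathcal L}$, using the two-circle argument. The degenerate case in which some $M_j$ is scalar needs no separate treatment, since it imposes no constraint on the refinement.
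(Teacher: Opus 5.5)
Your proof is correct. The first assertion is handled as in the paper, which applies Lemma~\ref{lem:two-eigenvalues} directly after noting that $[M_j]=[M_j-M_0]$ and $[M_j-I]=[M_j-M_1]$ lie in $G_{\mathcal L}$.

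For the compatibility of the partitions you take a different route. The paper starts from the same entry pattern $(\alpha,\alpha,\delta)$ and $(\beta,\gamma,\gamma)$. It uses the two-circle argument to get $\delta=\overline{\alpha}$ and $\gamma=\overline{\beta}$ with $\alpha,\beta\notin\mathbb R$. It then uses that $[M_i-M_j]$ has finite order, so its diagonal entries have equal modulus, and $|\alpha-\beta|=|\alpha-\overline{\beta}|$ forces $\operatorname{Im}\alpha\cdot\operatorname{Im}\beta=0$.

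You instead use the two-eigenvalue lemma as a black box on two renormalized quadruples, $N=M_j^{-1}M_k$ and $N'=(M_j-I)^{-1}(M_k-I)$. This gives $c/a=d/b$ and $(c-1)/(a-1)=(d-1)/(b-1)$, hence $c=a$. Your contradiction then comes from the invertibility of $M_k-M_j$ (skewness), not from the finite order of $[M_k-M_j]$.

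The step you flagged as the main obstacle is fine. The maps $(u,w)\mapsto(u,M_j^{-1}w)$ and $(u,w)\mapsto(u,(M_j-I)^{-1}(w-u))$ do exactly what you claim. In any case, the four classes $[N]$, $[N-I]=[M_j^{-1}(M_k-M_j)]$, $[N']$ and $[N'-I]=[(M_j-I)^{-1}(M_k-M_j)]$ are visibly products of the generators in Corollary~\ref{cor:generators-by-differences}. Finite order of these classes is all the proof of Lemma~\ref{lem:two-eigenvalues} uses.

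The paper's argument is a shorter explicit computation. Yours is more structural: once every normalized four-plane subconfiguration has at most two eigenvalues, block compatibility follows by pure algebra, with no further use of moduli or complex conjugation.

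One small imprecision in your selection step. Choosing $j$ that separates coordinates $1,2$ and $k$ that separates $2,3$ can give $j=k$, namely when the restricted pattern is $\{2\}\,|\,\{1,3\}$. What you actually need, and what holds, is this: each $M_j$ takes at most two values, so it separates either none or exactly two of the three pairs. Since all three pairs must be separated, two matrices with distinct restricted patterns exist, and that is what your relabeling uses.
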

\begin{proof}
For every $j\geq 2$, the classes
\[
[M_j]=[M_j-M_0]
\qquad\text{and}\qquad
[M_j-I]=[M_j-M_1]
\]
belong to $G_{\mathcal L}$. Since $G_{\mathcal L}$ is finite cyclic, both
classes have finite order. Therefore Lemma~\ref{lem:two-eigenvalues} applies
to $M_j$, and $M_j$ has at most two distinct eigenvalues. Since $M_j$ is
diagonal, its diagonal entries take at most two distinct values.

We now prove that the corresponding two-block decompositions are compatible.
Let $M_i$ and $M_j$ be two non-scalar matrices among the $M_2,\dots,M_r$.
Suppose, by contradiction, that their partitions of the coordinate indices are
not the same. Then there exist indices $u,v,w$ such that
\[
m_{iu}=m_{iv}\ne m_{iw}
\qquad\text{and}\qquad
m_{ju}\ne m_{jv}=m_{jw}.
\]
Set
\[
\alpha=m_{iu}=m_{iv},\qquad
\delta=m_{iw},
\]
and
\[
\beta=m_{ju},\qquad
\gamma=m_{jv}=m_{jw}.
\]
Thus $\alpha\ne\delta$ and $\beta\ne\gamma$.

By the proof of Lemma~\ref{lem:two-eigenvalues}, applied to $M_i$, the two
numbers $\alpha$ and $\delta$ have the same distance from $0$ and also the
same distance from $1$, so they are symmetric with respect to the real axis. Since they are distinct, they must be complex
conjugates:
\[
\delta=\overline{\alpha}.
\]
Similarly,
$\gamma=\overline{\beta}.$ 
In particular, $\alpha$ and $\beta$ are non-real.

Now consider the diagonal matrix $M_i-M_j$. Since the planes $L_i$ and
$L_j$ are skew, $M_i-M_j$ is invertible. Moreover, the projective class 
$[M_i-M_j]$ 
belongs to $G_{\mathcal L}$. Hence it has finite order in
$\PGL_{n+1}(\mathbb C)$. Therefore all ratios of its diagonal entries are
roots of unity, and in particular all its diagonal entries have the same
modulus.

Looking at the diagonal entries of $M_i-M_j$ in positions $u,v,w$, we get
\[
|\alpha-\beta|
=
|\alpha-\gamma|
=
|\delta-\gamma|.
\]
Using $\delta=\overline{\alpha}$ and $\gamma=\overline{\beta}$, the first
equality gives
\[
|\alpha-\beta|=|\alpha-\overline{\beta}|.
\]
Write
\[
\alpha=a+bi,\qquad \beta=c+di.
\]
Since $\alpha$ and $\beta$ are non-real, we have $b\ne 0$ and $d\ne 0$.
But
\[
|\alpha-\beta|^2=(a-c)^2+(b-d)^2
\]
whereas
\[
|\alpha-\overline{\beta}|^2=(a-c)^2+(b+d)^2.
\]
The equality
\[
|\alpha-\beta|=|\alpha-\overline{\beta}|
\]
therefore implies
\[
(b-d)^2=(b+d)^2,
\]
hence $bd=0$, a contradiction.

Thus the partitions associated to any two non-scalar matrices $M_i$ and
$M_j$ coincide. Choosing this common partition as
\[
\{1,\ldots,n+1\}=U\sqcup W,
\]
we conclude that every non-scalar $M_j$ is constant on $U$ and on $W$.
The scalar matrices are constant on every subset, so the same partition works
for all $M_j$.
\end{proof}

The spectral constraints established above now translate directly into a
geometric statement: on each $n$-plane of the configuration, the orbit
of a general point is collinear.

\begin{corollary}\label{cor:finite-cyclic-orbit-line}
Let $\mathcal L=\{L_\infty,L_0,L_1,\dots,L_r\}\subset \PP^{2n+1}_{\mathbb C}$ be a normalized configuration, and assume that
$G_{\mathcal L}$ is a finite cyclic group.

Then there exists a decomposition $\mathbb C^{n+1}=U\oplus W$ with
$1\le \dim U,\dim W\le n$ such that, after a suitable choice of basis, every matrix $M_i$ has the form
\[
M_i=\operatorname{diag}(\underbrace{a_i,\dots,a_i}_{\dim U},
\underbrace{b_i,\dots,b_i}_{\dim W})
\]
for suitable $a_i,b_i\in\mathbb C^*$.
Hence, for a general point $p$ on any member of $\mathcal L$, the orbit $[p]_{\mathcal L}$ on that $n$-plane is contained in a line.
\end{corollary}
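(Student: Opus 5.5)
The plan is to combine Lemma~\ref{lem:simultaneously-diagonalizable}, Lemma~\ref{lem:block-compatibility} and Lemma~\ref{lem:general-cyclic-orbit-span}. First, by Corollary~\ref{cor:generators-by-differences}, each class $[M_i]=[M_i-M_0]$ (for $i\ge 2$, and trivially $[I]$) lies in the finite cyclic group $G_{\mathcal L}$. Lemma~\ref{lem:simultaneously-diagonalizable} therefore gives a basis of $\mathbb C^{n+1}$ in which $M_2,\dots,M_r$ are all diagonal, while $M_0=0$ and $M_1=I$ stay fixed. This change of basis is $v\mapsto Pv$ applied simultaneously on $V_0$ and $V_\infty$, i.e.\ conjugation $M\mapsto P^{-1}MP$. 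It preserves $L_0$, $L_\infty$ and $L_I$, so the normalized form and $G_{\mathcal L}$ (up to conjugation) are unchanged.

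Next, Lemma~\ref{lem:block-compatibility} supplies a partition $\{1,\dots,n+1\}=U\sqcup W$ on whose blocks every $M_j$ is constant. After reordering coordinates this gives the displayed form $M_i=\operatorname{diag}(a_i,\dots,a_i,b_i,\dots,b_i)$, with $a_i,b_i\neq 0$ by invertibility. Two degenerate cases need care. If every $M_j$ is scalar, the partition is not forced, and I would take any splitting with $1\le|U|\le n$; this requires $n\ge 1$, which holds. If some $M_j$ is non-scalar, its eigenvalue partition has both parts nonempty, so $1\le\dim U,\dim W\le n$ automatically. I will set $U,W$ to be the spans of the corresponding basis vectors.

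For the orbit statement, every generator $[(M_j-M_k)^{-1}(M_i-M_k)]$ from Theorem~\ref{thm:generators} (and similarly the differences) is diagonal of the form $\operatorname{diag}(c,\dots,c,d,\dots,d)$. Hence each element of $G_{\mathcal L}$ acting on $L_i$, identified with $\PP^n$ via the first coordinate $u$, preserves the decomposition $U\oplus W$ and acts on it by two scalars. For a general $p=[u_U+u_W]$ with both components nonzero, the entire orbit lies in $\PP(\langle u_U,u_W\rangle)$, which is a line. This is the content of Lemma~\ref{lem:general-cyclic-orbit-span} with $t\le 2$, except that I argue directly, so the scalar case with $t=1$ is included.

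To pass from one plane to another, I use Lemma~\ref{lem:formula-fijk}. The maps $f_{ijk}$ between planes are also given by such block-scalar matrices in the $u$-coordinate, and for $L_\infty$ the $y$-coordinate plays the same role. So the slice $[p]_{\mathcal L}\cap L$ on any member is the image of the slice on a base plane under a block-scalar map, by Lemma~\ref{lem:slices-projectively-equivalent}, and is again contained in a line. I expect the main obstacle to be bookkeeping rather than substance: checking that the coordinate change respects the normalization, and treating the all-scalar case, where ``line'' may degenerate to a single point (still contained in a line).
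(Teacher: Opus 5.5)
Your proposal is correct and follows essentially the same route as the paper. It diagonalizes simultaneously via Lemma~\ref{lem:simultaneously-diagonalizable}, takes the common partition from Lemma~\ref{lem:block-compatibility}, treats the all-scalar case the same way, and observes that every generator of $G_{\mathcal L}$ acts by one scalar on $U$ and another on $W$, so a general orbit slice lies in $\PP(\langle u,w\rangle)$. Your extra checks are details the paper leaves implicit and do not change the argument: that the conjugation $M\mapsto P^{-1}MP$ preserves the normalization, and the explicit transfer between planes via the block-scalar maps $f_{ijk}$.
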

\begin{proof}
By Lemma~\ref{lem:simultaneously-diagonalizable}, the matrices defining the
normalized configuration may be simultaneously diagonalized. Thus, after a
common change of basis, we may write
\[
M_i=\operatorname{diag}(m_{i1},\dots,m_{i,n+1})
\qquad\text{for }i=1,\dots,r.
\]

By Lemma~\ref{lem:block-compatibility}, there exists a partition
\[
\{1,\dots,n+1\}=U\sqcup W
\]
such that every matrix $M_i$ is constant on each block. Equivalently, after a
possible reordering of the coordinates, there is a decomposition
\[
\mathbb C^{n+1}=U\oplus W
\]
with respect to which every $M_i$ has the form
\[
M_i=
\operatorname{diag}
\left(
\underbrace{a_i,\dots,a_i}_{\dim U},
\underbrace{b_i,\dots,b_i}_{\dim W}
\right)
\]
for suitable $a_i,b_i\in\mathbb C^*$.

If all the $M_i$ are scalar, then one may choose any nontrivial decomposition
\[
\mathbb C^{n+1}=U\oplus W
\]
with $1\leq \dim U,\dim W\leq n$. Otherwise, the partition above is induced
by a non-scalar matrix, and therefore both blocks are nonempty. Hence in all
cases we may assume
\[
1\leq \dim U,\dim W\leq n.
\]

We now prove the geometric statement. Let $p$ be a general point on one
member of $\mathcal L$.

By Corollary~\ref{cor:generators-by-differences}, $G_{\mathcal L}$ is
generated by the classes $[M_i-M_j]$. Since each $M_i-M_j$ acts by the
scalar $a_i-a_j$ on $U$ and by $b_i-b_j$ on $W$, and since this property
is preserved under composition and inversion, every element of
$G_{\mathcal L}$ acts by a scalar on $U$ and by a scalar on $W$.
Therefore, a representative of each point of $[p]_{\mathcal L}\cap L$
has the form $\lambda u+\mu w$ with $u\in U$, $w\in W$, $\lambda,\mu\in\mathbb C^*$.

For a general point $p$, both components $u$ and $w$ are nonzero. Hence
all these points lie in the projective line
$\PP(\langle u,w\rangle).$
Equivalently, by Lemma~\ref{lem:general-cyclic-orbit-span}, the relevant
finite cyclic action has at most two eigenspaces, so the span of a general
orbit has dimension at most one.

Therefore, for every $L\in\mathcal L$, the set
$[p]_{\mathcal L}\cap L$
is contained in a line.
\end{proof}

The following theorem shows that, in the finite cyclic case, the orbit of a
general point is collinear on each $n$-plane of the configuration, and
moreover these lines fit together into a single projective $3$-space
containing the whole orbit.

\begin{theorem}\label{thm:finite-cyclic-orbit-line}
Let $\mathcal L=\{L_\infty,L_0,L_1,\ldots,L_r\}\subset \PP^{2n+1}_{\mathbb C}$
be a normalized configuration of pairwise skew $n$-planes, and assume that
$G_{\mathcal L}\subseteq \PGL_{n+1}(\mathbb C)$ is a finite nontrivial cyclic group.
Then, for a general point $p\in L_\infty$, the set
$[p]_{\mathcal L}\cap L_\infty$
consists of $|G_{\mathcal L}|$ collinear points.
By projective equivalence, the same holds on every member of $\mathcal L$.

Moreover, for every general point $p\in L_\infty$, there exists a projective $3$-space
$\Lambda_p\subset \PP^{2n+1}_{\mathbb C}$ such that, for every $L\in\mathcal L$, the set
$[p]_{\mathcal L}\cap L$ spans the line $\Lambda_p\cap L$.
In particular, for general $p$, the orbit $[p]_{\mathcal L}$ spans $\Lambda_p\simeq \PP^3$.
\end{theorem}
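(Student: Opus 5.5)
We would start from Corollary~\ref{cor:finite-cyclic-orbit-line}: after a common change of basis on $\mathbb C^{n+1}$ (applied simultaneously to both summands of $V_0\oplus V_\infty$, which preserves $L_0$, $L_\infty$, $L_1=L_I$ and replaces each $M_i$ by its conjugate), there is a decomposition $\mathbb C^{n+1}=U\oplus W$ with every $M_i=\operatorname{diag}(a_i I_U, b_i I_W)$. Every element of $G_{\mathcal L}$ is then represented by a matrix acting by a scalar $\alpha$ on $U$ and a scalar $\beta$ on $W$. Hence $G_{\mathcal L}$ embeds into the torus $\{[\alpha I_U\oplus\beta I_W]\}\cong \mathbb C^*$ via $g\mapsto \alpha/\beta$, and this map is injective on $\PGL_{n+1}$-classes. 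So $G_{\mathcal L}$ is a finite cyclic group $\mu_N$ of ratios, where $N=|G_{\mathcal L}|$.

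For the first assertion, we take $p=[0,u+w]\in L_\infty$ with $u\in U$ and $w\in W$ both nonzero (a general point). Then $[p]_{\mathcal L}\cap L_\infty=G_{\mathcal L}\cdot p=\{[0,\zeta u+w]:\zeta\in\mu_N\}$. These points lie on the line $\PP(0\oplus\langle u,w\rangle)$, and they are pairwise distinct because $u$ and $w$ are independent. This gives exactly $N$ collinear points, and Lemma~\ref{lem:slices-projectively-equivalent} transports the statement to every $L\in\mathcal L$. One subtlety must be justified: we need $[p]_{\mathcal L}\cap L_\infty=G_\infty(\mathcal L)\cdot p$, i.e.\ the slice of the groupoid orbit is the orbit under the endomorphism group. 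This holds because any morphism $L_\infty\to L_\infty$ in $C_{\mathcal L}$ lies in $G_\infty$. We also use that the matrix realization of $G_\infty$ is the group generated by the classes $[M_i-M_j]$ (Corollary~\ref{cor:generators-by-differences}).

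For the 3-space, we set $E=\langle u,w\rangle\le\mathbb C^{n+1}$, which is $2$-dimensional and invariant under every $M_i$ since each $M_i$ is diagonal in the $U\oplus W$ splitting. We define $\Lambda_p=\PP(E\oplus E)\cong\PP^3$. As in the converse direction of Lemma~\ref{lem:transversal-subspace} (applied to each $M_i$ simultaneously), $\Lambda_p$ meets $L_0$, $L_\infty$ and each $L_{M_i}$ in the lines $\PP(E\oplus0)$, $\PP(0\oplus E)$ and $\PP\{(x,M_ix):x\in E\}$. It remains to show that the orbit lies in $\Lambda_p$. By Lemma~\ref{lem:formula-fijk}, and the analogous formulas involving $\infty$, each generator $f_{ijk}$ is given in the graph coordinates $x\mapsto(x,M_jx)$ by a matrix that is a polynomial expression in the $M$'s, hence preserves $E$. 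Therefore, if $(x,M_ix)$ has $x\in E$, its image has first coordinate in $E$, and the whole orbit lies in $\Lambda_p$.

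Spanning then follows. Each slice $[p]_{\mathcal L}\cap L$ consists of $N\ge2$ distinct points on the line $\Lambda_p\cap L$, so it spans that line. Two skew lines, say those on $L_0$ and $L_\infty$, already span $\PP(E\oplus E)=\Lambda_p$. The main obstacle is the careful identification of the maps $f_{ijk}$ as well as $f_{i\infty k}$ and $f_{\infty jk}$ in coordinates, to confirm that they all preserve the subspaces $E\oplus E$. I would do this by direct computation of $\langle p,L_k\rangle\cap L_j$ for $p=(x,M_ix)$: the resulting point is $(y,M_jy)$ with $y=(M_j-M_k)^{-1}(M_i-M_k)x$, and the $\infty$ cases are treated similarly.
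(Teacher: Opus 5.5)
Your proposal is correct and follows essentially the same route as the paper: the common $U\oplus W$ block form from Corollary~\ref{cor:finite-cyclic-orbit-line}, the slice on $L_\infty$ lying on $\PP(\langle u,w\rangle)$, the $3$-space $\Lambda_p=\PP(E\oplus E)$ built from the $M_i$-invariant plane $E=\langle u,w\rangle$, and spanning via the two skew slices on $L_0$ and $L_\infty$. Two of your additions tighten the paper's argument, which only shows that the slice has at least two points and simply asserts that slice representatives $(x,M_ix)$ have $x\in E$: your injective map $g\mapsto\alpha/\beta$ gives the exact count $|G_{\mathcal L}|$, and your coordinate formulas for $f_{ijk}$, $f_{i\infty k}$, $f_{\infty jk}$ justify the containment of the orbit in $\Lambda_p$ (these formulas are rational rather than polynomial in the $M$'s, since they involve inverses, but they still preserve $E$).
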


\begin{proof}
By Lemma~\ref{lem:simultaneously-diagonalizable}, the matrices $M_2,\dots,M_r$
defining the planes of $\mathcal L$ are simultaneously diagonalizable over
$\mathbb C$. By Lemma~\ref{lem:block-compatibility}, after a suitable choice
of basis there exists a decomposition
\[
\mathbb C^{n+1}=U\oplus W
\]
with $1\leq \dim U,\dim W\leq n$ such that every matrix $M_i$ has the form
\[
M_i=
\operatorname{diag}
\left(
\underbrace{a_i,\dots,a_i}_{\dim U},
\underbrace{b_i,\dots,b_i}_{\dim W}
\right).
\]
Thus each $M_i$ acts by the scalar $a_i$ on $U$ and by the scalar $b_i$
on $W$. 

Since each difference $M_i-M_j$ is scalar on $U$ and on $W$, and since
this property is preserved under composition and inversion, Corollary~\ref{cor:generators-by-differences}
implies that every element of $G_{\mathcal L}$ acts by a scalar on $U$ and
by a scalar on $W$.

\medskip
\noindent\textit{Collinearity of the orbit on each plane.}
Let $p\in L_\infty$ be a general point, and write its corresponding vector as
$v=u+w, \qquad u\in U,\quad w\in W,$
with $u,w\neq 0$. Every element of $G_{\mathcal L}$ sends $[v]$ to a
point of the form
$[\lambda u+\mu w]$
for some $\lambda,\mu\in\mathbb C^*$. Hence the slice
$[p]_{\mathcal L}\cap L_\infty$
is contained in the projective line
$\PP(\langle u,w\rangle).$

Since $G_{\mathcal L}$ is nontrivial, a general orbit contains at least two
distinct points, and therefore this slice spans that line. By
Lemma~\ref{lem:slices-projectively-equivalent}, the same holds on every member
of $\mathcal L$.

\medskip
\noindent\textit{The distinguished $\PP^3$.}
Set
\[
E_p=\langle u,w\rangle\subset \mathbb C^{n+1}.
\]
The subspace $E_p$ is invariant under every $M_i$, because
\[
M_i(u)=a_i u\in \langle u\rangle
\qquad\text{and}\qquad
M_i(w)=b_i w\in \langle w\rangle.
\]
Therefore, by the same argument as in
Lemma~\ref{lem:transversal-subspace}, the projective subspace
\[
\Lambda_p=\PP(E_p\oplus E_p)\subset \PP^{2n+1}_{\mathbb C}
\]
is a projective $3$-space. It meets $L_\infty$ and $L_0$ in the lines
\[
\PP(E_p\oplus 0)
\qquad\text{and}\qquad
\PP(0\oplus E_p),
\]
and, for $i=1,\dots,r$, it meets
$L_i=L_{M_i}$
in the line
$\PP\{(x,M_i x):x\in E_p\}.$

We claim that
\[
[p]_{\mathcal L}\cap L\subseteq \Lambda_p\cap L
\]
for every $L\in\mathcal L$. For $L=L_i$, the points of the slice are
represented by vectors of the form
\[
(x,M_i x),
\qquad
x\in E_p.
\]
Since $M_i(E_p)\subseteq E_p$, these vectors lie in $E_p\oplus E_p$, and
therefore the corresponding points lie in $\Lambda_p\cap L_i$. The cases
$L=L_\infty$ and $L=L_0$ are immediate from the descriptions above.

Since the slice on $L_\infty$ spans the line
$\Lambda_p\cap L_\infty,$
and the same holds on $L_0$ by projective equivalence, the full orbit spans
both lines
$\Lambda_p\cap L_\infty$
and
$\Lambda_p\cap L_0.$
These two lines are skew in $\Lambda_p\simeq \PP^3_{\mathbb C}$, so they span
$\Lambda_p$. Hence the full orbit $[p]_{\mathcal L}$ spans
$\Lambda_p\simeq \PP^3_{\mathbb C}.$

Finally, for every $L\in\mathcal L$, the containment
$\langle [p]_{\mathcal L}\cap L\rangle\subseteq \Lambda_p\cap L$
was proved above, while the slice $[p]_{\mathcal L}\cap L$ spans a line by
projective equivalence. Hence
$\langle [p]_{\mathcal L}\cap L\rangle=\Lambda_p\cap L.$
\end{proof}

Since each general orbit is contained in a distinguished $\PP^3_{\mathbb C}$ and is
collinearly complete with respect to the induced configuration of skew
lines, the theory from \cite{politus3} applies.

\begin{proposition}\label{prop:P3-geproci}
With the notation of Theorem~\ref{thm:finite-cyclic-orbit-line}, let $p\in L_\infty$ be a general point, and let
$\Lambda_p\simeq \PP^3_{\mathbb C}$ be the projective $3$-space spanned by the orbit $[p]_{\mathcal L}$.
Then the finite set
$[p]_{\mathcal L}\subset \Lambda_p\simeq \PP^3_{\mathbb C}$
is geproci.
\end{proposition}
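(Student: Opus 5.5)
The plan is to reduce the statement to the $n=1$ theory of \cite{politus3}. Concretely, I would realize $[p]_{\mathcal L}$ as a finite collinearly complete set for a configuration of pairwise skew lines in $\Lambda_p\simeq\PP^3_{\mathbb C}$, and then apply \cite[Theorem~3.0.3]{politus3}. That theorem says such sets are exactly finite geproci half-grids.

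First I would fix the induced configuration. With $E_p=\langle u,w\rangle$ as in the proof of Theorem~\ref{thm:finite-cyclic-orbit-line}, the lines are
\[
\ell_L:=\Lambda_p\cap L,\qquad L\in\mathcal L .
\]
Under the identification $\Lambda_p=\PP(E_p\oplus E_p)\simeq\PP^3_{\mathbb C}$, the line $\ell_\infty$ is $\PP(E_p\oplus 0)$ and $\ell_0$ is $\PP(0\oplus E_p)$. For $i\ge 1$, $\ell_i$ is the graph line of the restriction $M_i|_{E_p}$, which is $\operatorname{diag}(a_i,b_i)$ in the basis $u,w$.

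These lines are pairwise skew: two distinct lines lie in the two disjoint $n$-planes $L,L'$, so they cannot meet. Equivalently, Lemma~\ref{lem:skew-criterion} applies with $n=1$, since $(M_i-M_j)|_{E_p}=\operatorname{diag}(a_i-a_j,b_i-b_j)$ is invertible because $M_i-M_j$ is. So $\mathcal L_p=\{\ell_L\}$ is a normalized configuration of $r+2\ge 3$ skew lines in $\PP^3$.

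Next I would show that $Z:=[p]_{\mathcal L}$ is collinearly complete with respect to $\mathcal L_p$. Take a point $q\in Z\cap\ell_i$ and distinct indices $j,k$. The unique line through $q$ meeting $\ell_j$ and $\ell_k$ also meets $L_j$ and $L_k$. By the uniqueness in Lemma~\ref{lem:unique-line} in $\PP^{2n+1}$, it is therefore the transversal $\ell_{jk}(q)$. Its intersection points are $f_{ijk}(q)$ and $f_{ikj}(q)$, which lie in $Z$ by Proposition~\ref{prop:lc-orbits}.

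Equivalently, the maps $f_{ijk}$ of $\mathcal L$ preserve $\Lambda_p$ and restrict to the maps $f_{ijk}$ of $\mathcal L_p$. At the matrix level this is Lemma~\ref{lem:formula-fijk} applied to the restrictions to $E_p$. Hence $Z$ is closed under the groupoid $C_{\mathcal L_p}$, and so it is a union of $C_{\mathcal L_p}$-orbits. It is finite because $G_{\mathcal L}$ is finite.

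Finally, \cite[Theorem~3.0.3]{politus3} gives that $Z$ is a finite geproci half-grid in $\Lambda_p$. Concretely, a general projection of $Z$ from $\Lambda_p$ to $\PP^2$ is the complete intersection of the image of $\bigcup\ell_L$ with a curve of degree $|G_{\mathcal L}|$; here I use that each slice has $|G_{\mathcal L}|$ points by Theorem~\ref{thm:finite-cyclic-orbit-line}. In particular $Z$ is geproci.

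The main obstacle is making sure the hypotheses of \cite[Theorem~3.0.3]{politus3} really hold. That theorem may require, for instance, that $Z$ meet every line of $\mathcal L_p$ in the same number $s\ge 2$ of points, or that $Z$ be nondegenerate. The first follows from Lemma~\ref{lem:slices-projectively-equivalent} together with the nontriviality of $G_{\mathcal L}$. The second follows because $Z$ spans $\Lambda_p$, which was shown in Theorem~\ref{thm:finite-cyclic-orbit-line}.
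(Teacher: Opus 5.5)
Your proposal is correct and follows essentially the same route as the paper. The lines $\Lambda_p\cap L$ are pairwise skew because the $n$-planes are; collinear completeness inside $\Lambda_p$ follows from the uniqueness in Lemma~\ref{lem:unique-line} together with Proposition~\ref{prop:lc-orbits}; and then \cite[Theorem~3.0.3]{politus3} is applied. Your matrix-level remark and your hypothesis checks are harmless extras, and the only slip is cosmetic: with the paper's convention $V_0=K^{n+1}\oplus 0$, the line $\PP(E_p\oplus 0)$ is $\Lambda_p\cap L_0$, not $\Lambda_p\cap L_\infty$.
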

\begin{proof}
By Theorem~\ref{thm:finite-cyclic-orbit-line}, for every $L\in\mathcal L$ the
set $[p]_{\mathcal L}\cap L$ spans the line $\Lambda_p\cap L$.
Since the $n$-planes of $\mathcal L$ are pairwise skew in $\PP^{2n+1}_{\mathbb C}$,
the lines $\{\Lambda_p\cap L\}_{L\in\mathcal L}$ are pairwise skew in
$\Lambda_p\simeq\PP^3_{\mathbb C}$ (a common point of $\Lambda_p\cap L_i$ and
$\Lambda_p\cap L_j$ would lie in $L_i\cap L_j=\varnothing$).

It remains to show that $[p]_{\mathcal L}$ is collinearly complete with
respect to the configuration of skew lines
\[
\{\Lambda_p\cap L\}_{L\in\mathcal L}
\]
inside $\Lambda_p\simeq\PP^3_{\mathbb C}$. Let $\ell\subset\Lambda_p$ be a line meeting
three distinct members
\[
\Lambda_p\cap L_i,\qquad \Lambda_p\cap L_j,\qquad \Lambda_p\cap L_k
\]
of this configuration, and suppose that
\[
q:=\ell\cap(\Lambda_p\cap L_i)\in [p]_{\mathcal L}.
\]
Since $\ell\subset\Lambda_p\subset\PP^{2n+1}_{\mathbb C}$, the line $\ell$ is also a
line through $q\in L_i$ meeting both $L_j$ and $L_k$. By
Lemma~\ref{lem:unique-line}, it is the unique such line, and therefore
\[
\ell\cap L_j=f_{ijk}(q),
\qquad
\ell\cap L_k=f_{ikj}(q).
\]
Since $[p]_{\mathcal L}$ is an orbit under the groupoid
$C_{\mathcal L}$, it is closed under all maps $f_{ijk}$
by Proposition~\ref{prop:lc-orbits}. Therefore
\[
f_{ijk}(q),\ f_{ikj}(q)\in [p]_{\mathcal L}.
\]
This proves that $[p]_{\mathcal L}$ is collinearly complete with respect to
the configuration of skew lines
\[
\{\Lambda_p\cap L\}_{L\in\mathcal L}.
\]

Thus \cite[Theorem~3.0.3]{politus3} applies and shows that
$[p]_{\mathcal L}\subset\Lambda_p\simeq\PP^3_{\mathbb C}$ is geproci.
\end{proof}

The next results provide a weaker intersection-theoretic statement in the
finite cyclic case. Each general orbit is geproci inside its distinguished
$\PP^3_{\mathbb C}$, and finite unions of such orbits can be cut out set-theoretically
from the union of the $n$-planes of the configuration by a reducible surface.
This does not yet imply the geprofi property from
Question~\ref{conj:geprofi}.

\begin{theorem}\label{thm:union-orbits-set-theoretic-intersection}
Let
$\mathcal L=\{L_\infty,L_0,L_1,\ldots,L_r\}\subset \PP^{2n+1}_{\mathbb C}$
be a normalized configuration such that $G_{\mathcal L}$ is a finite cyclic
group. Let
$Z=O_1\cup\cdots\cup O_m$
be a finite union of orbits, and assume that each orbit $O_\nu$ is general
enough for Theorem~\ref{thm:finite-cyclic-orbit-line} to apply. Set
$X=\bigcup_{L\in\mathcal L}L.$
Then there exists a $2$-dimensional reducible subvariety
$Y\subset \PP^{2n+1}_{\mathbb C}$ such that $Z=X\cap Y$ set-theoretically.
\end{theorem}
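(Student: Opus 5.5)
The plan is to build $Y$ one orbit at a time inside the distinguished $3$-spaces, and then check that the pieces coming from different orbits do not create extra intersection points with $X$.

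\emph{Step 1: the surface for one orbit.} Fix $\nu$ and let $\Lambda_\nu=\Lambda_{p_\nu}\simeq\PP^3_{\mathbb C}$ be the $3$-space given by Theorem~\ref{thm:finite-cyclic-orbit-line}. By that theorem, $O_\nu\subset\Lambda_\nu$, and $X\cap\Lambda_\nu$ is the union of the $r+2$ pairwise skew lines $\ell_L=\Lambda_\nu\cap L$, for $L\in\mathcal L$. By Proposition~\ref{prop:P3-geproci} and \cite[Theorem~3.0.3]{politus3}, $O_\nu$ is a half-grid in $\Lambda_\nu$. Each line $\ell_L$ contains exactly $s=|G_{\mathcal L}|$ points of $O_\nu$, and $O_\nu$ is the complete intersection, after a general projection, of the projected lines with a curve of degree $s$.

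I would lift this to $\Lambda_\nu$ as follows. Take a general point $c_\nu\in\Lambda_\nu$. Project from $c_\nu$ to a plane, take the degree $s$ curve $\Gamma$ there, and let $S_\nu\subset\Lambda_\nu$ be the cone over $\Gamma$ with vertex $c_\nu$. This mirrors the cubic cone in Example~\ref{ex:D4 in P^5}.

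The complete-intersection property says that the projected lines meet $\Gamma$ exactly in the projected points. For a general vertex, the projection is injective on the finite set $O_\nu$, and the lines $\ell_L$ meet pairwise in no points, so we get
\[
S_\nu\cap\bigcup_L\ell_L=O_\nu
\]
set-theoretically. No line $\ell_L$ lies in $S_\nu$, since its image is a line and $\Gamma$ can be chosen not to contain any of the projected lines. Since $X\cap\Lambda_\nu=\bigcup_L\ell_L$, it follows that $S_\nu\cap X=O_\nu$.

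\emph{Step 2: the union.} Set $Y=S_1\cup\cdots\cup S_m$. This is a reducible surface of dimension $2$ (reducible for $m\ge 2$; for $m=1$ one may add a component, or choose $\Gamma$ reducible, for instance as a union of lines when the half-grid allows it). Then
\[
X\cap Y=\bigcup_\nu (X\cap S_\nu)=\bigcup_\nu O_\nu=Z.
\]
The key fact is that $S_\nu\subset\Lambda_\nu$ and $X\cap\Lambda_\nu$ is already just the union of the lines $\ell_L$. So no cross-terms between different orbits appear, and the intersection is local to each $\Lambda_\nu$.

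\emph{Main obstacle.} I expect the hard part to be Step 1: justifying that a cone over the degree $s$ curve of a general projection cuts exactly $O_\nu$ from the lines, with no extra points on any $\ell_L$. The points to check are these:
\begin{itemize}
\item geproci holds for a general projection center, so it can be taken compatible with the chosen vertex $c_\nu$;
\item the curve $\Gamma$ can be chosen not to contain any projected line;
\item a point of $\ell_L\cap S_\nu$ maps to a point of $\pi(\ell_L)\cap\Gamma=\pi(O_\nu)$, and by injectivity of $\pi$ restricted to $\ell_L$ (since $c_\nu\notin\ell_L$), that point is in $O_\nu$.
\end{itemize}
This last injectivity argument is the crux, and I would write it out carefully.
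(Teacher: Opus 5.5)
Your proposal is correct and follows essentially the paper's proof. The paper likewise takes, in each distinguished $\Lambda_\nu$, the cone over the degree-$s$ geproci curve from a general center. It uses $Y_\nu\subset\Lambda_\nu$ to reduce $Y_\nu\cap L$ to $Y_\nu\cap\ell_{\nu,L}$ and concludes via injectivity of the projection on each line (your caveat about reducibility when $m=1$ is a point the paper passes over).

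In your crux, the identity you need is $\pi(\ell_L)\cap\Gamma=\pi(O_\nu\cap\ell_L)$, not $\pi(O_\nu)$, since injectivity on $\ell_L$ alone does not rule out a preimage matching a point of $O_\nu$ on another line. This follows from Bezout: $\Gamma$ has degree $s$, does not contain $\pi(\ell_L)$, and already contains the $s$ distinct points $\pi(O_\nu\cap\ell_L)$, so injectivity on $\ell_L$ then finishes.
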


\begin{proof}
Fix one orbit $O_\nu$. By Theorem~\ref{thm:finite-cyclic-orbit-line}, there
exists a projective $3$-space $\Lambda_\nu\subset \PP^{2n+1}_{\mathbb C}$
such that $O_\nu\subset \Lambda_\nu$ and, for every $L\in\mathcal L$, the
set $O_\nu\cap L$ spans the line $\Lambda_\nu\cap L$. By
Proposition~\ref{prop:P3-geproci}, the orbit $O_\nu\subset
\Lambda_\nu\simeq\PP^3_{\mathbb C}$ is geproci with respect to the configuration of
pairwise skew lines $\{\Lambda_\nu\cap L : L\in\mathcal L\}$.

Choose a general projection $\pi_\nu:\Lambda_\nu\dashrightarrow \PP^2_{\mathbb C}$ with
center $P_\nu\in\Lambda_\nu$. Since $O_\nu$ is geproci, there exists a
plane curve $D_\nu\subset \PP^2_{\mathbb C}$ such that
\[
\pi_\nu(O_\nu)
=
\pi_\nu\Bigl(\bigcup_{L\in\mathcal L}(\Lambda_\nu\cap L)\Bigr)\cap D_\nu
\]
as a proper intersection in $\PP^2_{\mathbb C}$. Let $Y_\nu = \pi_\nu^{-1}(D_\nu)
\subset \Lambda_\nu$ be the cone over $D_\nu$ with vertex $P_\nu$. Then
$Y_\nu$ is a surface contained in $\Lambda_\nu$.

\medskip
\noindent\textit{Claim: $Y_\nu\cap L = O_\nu\cap L$ for every
$L\in\mathcal L$.}

Set $\ell_{\nu,L}=\Lambda_\nu\cap L$. Since $Y_\nu\subset\Lambda_\nu$,
every point of $Y_\nu\cap L$ lies in $\Lambda_\nu\cap L = \ell_{\nu,L}$,
and conversely $Y_\nu\cap\ell_{\nu,L}\subseteq Y_\nu\cap L$, so
\[
Y_\nu\cap L = Y_\nu\cap\ell_{\nu,L}.
\]
Since $P_\nu$ is general it does not lie on $\ell_{\nu,L}$, so the
restriction $\pi_\nu|_{\ell_{\nu,L}}:\ell_{\nu,L}\to\pi_\nu(\ell_{\nu,L})$
is an isomorphism. Since $Y_\nu=\pi_\nu^{-1}(D_\nu)$ by definition,
\[
Y_\nu\cap\ell_{\nu,L}
= \pi_\nu^{-1}(D_\nu)\cap\ell_{\nu,L}
= (\pi_\nu|_{\ell_{\nu,L}})^{-1}
  \!\left(D_\nu\cap\pi_\nu(\ell_{\nu,L})\right).
\]
By the geproci property of $O_\nu$ and the definition of $D_\nu$,
\[
D_\nu\cap\pi_\nu(\ell_{\nu,L}) = \pi_\nu(O_\nu\cap\ell_{\nu,L}).
\]
Since $\pi_\nu|_{\ell_{\nu,L}}$ is injective,
\[
Y_\nu\cap\ell_{\nu,L} = O_\nu\cap\ell_{\nu,L},
\]
and therefore $Y_\nu\cap L = O_\nu\cap L$, as claimed.

\medskip
Now set $Y=\bigcup_{\nu=1}^m Y_\nu$. Each $Y_\nu$ is a surface, so $Y$
is a $2$-dimensional reducible subvariety of $\PP^{2n+1}_{\mathbb C}$.

We show $Z = X\cap Y$. For the inclusion $Z\subseteq X\cap Y$, let
$q\in O_\nu$. Since $O_\nu\subset X$, there is some $L\in\mathcal L$
such that $q\in O_\nu\cap L$. By the claim,
\[
O_\nu\cap L=Y_\nu\cap L,
\]
so $q\in Y_\nu\subseteq Y$. Hence $q\in X\cap Y$.

For the reverse inclusion, let $x\in X\cap Y$. Then $x\in L$ for some
$L\in\mathcal L$ and $x\in Y_\nu$ for some $\nu$, hence
$x\in L\cap Y_\nu$. By the claim, $L\cap Y_\nu = O_\nu\cap L\subseteq Z$.
Thus $X\cap Y\subseteq Z$, and therefore $X\cap Y = Z$ set-theoretically.
In particular, $X\cap Y$ is finite.
\end{proof}

\begin{proposition}\label{prop:geprofi}
Keep the notation and hypotheses of
Theorem~\ref{thm:union-orbits-set-theoretic-intersection}, and assume
moreover that $n\ge 3$.
Let $O_\nu$ be one of the orbits, let $s=|O_\nu\cap L|$ for any
$L\in\mathcal L$ (this is independent of $L$ by
Lemma~\ref{lem:slices-projectively-equivalent}), and let
$r=|\mathcal L|$.
Then $O_\nu$ is $(s,r)$-geprofi.
\end{proposition}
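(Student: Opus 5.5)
The strategy is to build, inside $\PP^{2n+1}_{\mathbb C}$, two $n$-dimensional varieties whose general projections to $\PP^{2n}_{\mathbb C}$ meet properly exactly in the image of $O_\nu$. The first variety is $X=\bigcup_{L\in\mathcal L}L$, of degree $r$. The second should be a cone of degree $s$ that meets each plane only in the slice of $O_\nu$.

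\textbf{Construction.} By Theorem~\ref{thm:finite-cyclic-orbit-line} and Proposition~\ref{prop:P3-geproci}, $O_\nu$ lies in $\Lambda_\nu\simeq\PP^3$. Moreover, a general projection from a point $P_\nu\in\Lambda_\nu$ sends $O_\nu$ to $\pi_\nu(\bigcup_L \ell_{\nu,L})\cap D_\nu$. Here $D_\nu$ is a plane curve, and its degree should be $s$, because each line $\pi_\nu(\ell_{\nu,L})$ meets $D_\nu$ in exactly $s$ points of a proper intersection of degree $rs$. (If \cite[Theorem~3.0.3]{politus3} does not give the degree outright, I would check it by a Bézout count.) Let $Y_\nu\subset\Lambda_\nu$ be the cone over $D_\nu$, a surface of degree $s$.

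To thicken this to dimension $n$, choose a general linear subspace $\Theta\subset\PP^{2n+1}$ of dimension $n-3$, disjoint from $\Lambda_\nu$. Set $V=\mathrm{Join}(\Theta,Y_\nu)$. This is a cone of dimension $2+(n-3)+1=n$ and degree $s$.

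\textbf{Key claim.} We need $V\cap L=O_\nu\cap L$ for every $L\in\mathcal L$. A point of $V$ lies on a line joining $\theta\in\Theta$ to $y\in Y_\nu$. To control $V\cap L$, project from $\Theta$. The span $\langle\Theta,\Lambda_\nu\rangle$ has dimension $n+1$, and $L$ has dimension $n$, so their intersection has expected dimension $n+(n+1)-(2n+1)=0$. However, $L$ already meets $\Lambda_\nu$ in a line, so this intersection is at least that line. For general $\Theta$ I would show that $\langle\Theta,\Lambda_\nu\rangle\cap L=\ell_{\nu,L}$, by a dimension count on the quotient $\PP^{2n+1}/\Lambda_\nu\simeq\PP^{2n-3}$. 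In that quotient, $L$ becomes an $(n-2)$-plane and $\Theta$ becomes a general $(n-3)$-plane. Since $(n-2)+(n-3)<2n-3$, the two are disjoint. This is the point where the hypothesis $n\ge3$ is used: it is needed for $\Theta$ to exist with nonnegative dimension.

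Given this, $V\cap L\subseteq\ell_{\nu,L}$, and a point of $V$ lying in $\Lambda_\nu$ must already lie in $Y_\nu$, since $\Theta\cap\Lambda_\nu=\emptyset$ and the joins are disjoint off $\Lambda_\nu$. The claim in the proof of Theorem~\ref{thm:union-orbits-set-theoretic-intersection} then gives $V\cap L=O_\nu\cap L$. Hence $X\cap V=O_\nu$ set-theoretically.

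\textbf{Projection.} Take a general projection $\pi:\PP^{2n+1}\dashrightarrow\PP^{2n}$. The images $\pi(X)$ and $\pi(V)$ are $n$-dimensional, of degrees $r$ and $s$, and both contain $\pi(O_\nu)$. The main obstacle is showing that $\pi(X)\cap\pi(V)$ has no extra points, or at least is finite and equals $\pi(O_\nu)$ scheme-theoretically.

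To handle this, I would count degrees. By Bézout, a proper intersection has degree $rs$, and $|O_\nu|=rs$. It then suffices to show the intersection is finite and contains no points beyond $\pi(O_\nu)$. An extra point would come from a secant line of $X\cup V$ through the center of projection, joining a point of $X$ to a point of $V$. These secants form a family of dimension at most $n+n+1=2n+1$. I would argue, as in Example~\ref{ex:12-points-P5}, that for a general center the only points of $\pi(X)\cap\pi(V)$ come from $X\cap V$. The delicate part is making this rigorous, since joins of two $n$-folds can fill $\PP^{2n+1}$. If the direct argument fails, I would instead use the degree count together with reducedness of $\pi(O_\nu)$ at the $rs$ points: if the intersection is finite, then $rs$ reduced points already exhaust the degree. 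Finiteness itself follows from a dimension estimate on the locus of secants.
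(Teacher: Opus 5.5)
Your proof is correct, and it takes a different route from the paper's. The paper does not thicken $Y_\nu$. It projects the surface $Y_\nu$ itself and excludes extraneous points by observing that each one comes from a line through the center joining $X\setminus Y_\nu$ to $Y_\nu$. All such lines lie in a join of dimension at most $n+3\le 2n$, and this is where the paper uses $n\ge 3$.

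As a result, the paper's conclusion pairs the $n$-fold $\rho(X)$ with a \emph{surface} $\rho(Y_\nu)$, whereas the definition of geprofi asks for two $n$-dimensional varieties meeting properly. For $n\ge 3$ a surface and an $n$-fold in $\PP^{2n}$ are not expected to meet at all, so this is not a proper intersection in the usual sense. Your cone $V=\mathrm{Join}(\Theta,Y_\nu)$ is exactly what repairs this. The cost is that the cheap join count is no longer available. You replace it with finiteness plus B\'ezout: the $rs$ distinct points of $\pi(O_\nu)$ exhaust $\deg\pi(X)\cdot\deg\pi(V)=rs$, which also gives multiplicity one. That step is sound, and finiteness does follow from the $(2n+1)$-dimensional incidence of lines meeting $L$ and $V$.

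In fact your worry about joins filling $\PP^{2n+1}$ does not arise, and you can drop B\'ezout. You have $V\subset\Pi:=\langle\Theta,\Lambda_\nu\rangle$, and you showed $\Pi\cap L=\ell_{\nu,L}$. So every line joining a point of $L$ to a point of $V$ lies in $\langle L,\Pi\rangle$, which has dimension $n+(n+1)-1=2n$. Choose the center off these $r$ hyperplanes. Then $\pi$ is an isomorphism on $\Pi$, is injective on $O_\nu$, and $\pi(X)\cap\pi(V)=\pi(X\cap V)=\pi(O_\nu)$ directly.

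The same observation shows that your construction does not really need $n\ge 3$. For $n=2$, take $\Theta=\emptyset$, so that $V=Y_\nu$ and $\Pi=\Lambda_\nu$. Then $\langle L,\Lambda_\nu\rangle\simeq\PP^4\subset\PP^5$, and the argument settles the case $n=2$, which the paper's closing remark leaves open.
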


\begin{proof}
By Theorem~\ref{thm:union-orbits-set-theoretic-intersection}, we have
$Z=X\cap Y$ in $\PP^{2n+1}_{\mathbb C}$, where
$X=\bigcup_{L\in\mathcal L}L$ has dimension $n$, and
$Y=\bigcup_\mu Y_\mu$ with each $Y_\nu$ a surface of degree $s$.
Let $\rho:\PP^{2n+1}_{\mathbb C}\dashrightarrow\PP^{2n}_{\mathbb C}$ be a general projection
with center $P\in\PP^{2n+1}_{\mathbb C}$.
Since $Z$ is finite, $\rho$ is injective on $Z$ for a general center.

Fix one orbit $O_\nu$. We claim that
$\rho(O_\nu)=\rho(X)\cap\rho(Y_\nu)$.
The Claim in the proof of
Theorem~\ref{thm:union-orbits-set-theoretic-intersection} shows that
$Y_\nu\cap L=O_\nu\cap L$ for every $L\in\mathcal L$, hence
$X\cap Y_\nu=O_\nu$.
Therefore $\rho(O_\nu)\subseteq \rho(X)\cap\rho(Y_\nu)$.

For the reverse inclusion, suppose
$q\in \rho(X)\cap\rho(Y_\nu)\setminus\rho(O_\nu)$.
Then there exist $x\in X$ and $y\in Y_\nu$ with
$\rho(x)=\rho(y)=q$ and $x\neq y$, so the line $\overline{xy}$ passes
through $P$.
Moreover, $x\notin Y_\nu$: if $x\in Y_\nu$, then
$x\in X\cap Y_\nu=O_\nu$, hence $q=\rho(x)\in \rho(O_\nu)$,
a contradiction.
Thus $x\in X\setminus Y_\nu$ and $y\in Y_\nu$.

So every additional intersection point gives a line through $P$ joining a
point of $X\setminus Y_\nu$ to a point of $Y_\nu$.
All such lines are contained in the join $J(X\setminus Y_\nu,Y_\nu)$,
which has dimension at most
$\dim(X\setminus Y_\nu)+\dim Y_\nu+1\le n+3$.
Since $n\ge 3$, we have $n+3\le 2n<2n+1=\dim\PP^{2n+1}_{\mathbb C}$, so this join is
a proper subvariety of $\PP^{2n+1}_{\mathbb C}$.
Hence, for a general center $P$, no additional point exists, and
$\rho(O_\nu)=\rho(X)\cap\rho(Y_\nu)$.

This intersection is proper because
$\dim\rho(X)+\dim\rho(Y_\nu)=n+2\le 2n=\dim\PP^{2n}_{\mathbb C}$.
Also, $\deg\rho(X)=r$, since a general codimension-$n$ linear subspace of
$\PP^{2n}_{\mathbb C}$ pulls back to a general codimension-$n$ linear subspace of
$\PP^{2n+1}_{\mathbb C}$ through $P$, which meets each member of $\mathcal L$ in one
point.
Moreover, $\deg\rho(Y_\nu)=\deg Y_\nu=s$, since a general projection
preserves the degree of $Y_\nu$.
Thus $\rho(O_\nu)$ is the proper intersection of an $n$-dimensional
variety of degree $r$ and a surface of degree $s$, so $O_\nu$ is
$(s,r)$-geprofi.
\end{proof}

\begin{corollary}
With the same notation, if $Z=O_1\cup\cdots\cup O_m\subseteq \PP^{2n+1}_{\mathbb C}$, with $n>2, $ is a finite union of
pairwise distinct orbits, then for a general projection
$\rho:\PP^{2n+1}_{\mathbb C}\dashrightarrow\PP^{2n}_{\mathbb C}$ one has
$\rho(Z)=\rho(X)\cap\bigl(\rho(Y_1)\cup\cdots\cup\rho(Y_m)\bigr)$.
In particular, $\rho(Z)$ is the proper intersection of a degree-$r$
variety with a variety of degree $ms$.
\end{corollary}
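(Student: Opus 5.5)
The plan is to reduce the corollary to Proposition~\ref{prop:geprofi}, applied once for each orbit, and then to control how different orbits interact after projection. First, by the Claim in the proof of Theorem~\ref{thm:union-orbits-set-theoretic-intersection}, $X\cap Y_\nu=O_\nu$ for each $\nu$. Distributing the intersection over the union then gives
\[
\rho(X)\cap\bigl(\rho(Y_1)\cup\cdots\cup\rho(Y_m)\bigr)=\bigcup_{\nu=1}^m\bigl(\rho(X)\cap\rho(Y_\nu)\bigr).
\]
Since $n>2$ means $n\ge 3$, Proposition~\ref{prop:geprofi} applies to each orbit and yields $\rho(X)\cap\rho(Y_\nu)=\rho(O_\nu)$ for a general center $P$. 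The proposition gives this for $P$ outside a proper subvariety, namely the join $J(X\setminus Y_\nu,Y_\nu)$ together with the locus where $\rho$ fails to be injective on $Z$. Taking $P$ outside the finite union of these bad loci over $\nu=1,\dots,m$ makes all equalities hold at once. The union of the right-hand sides is then $\rho(Z)$, which proves the set-theoretic equality.

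For properness, I will argue dimensionally: $\rho(X)$ has dimension $n$, $\rho(Y)$ has dimension $2$, and $n+2\le 2n$. The finite set-theoretic intersection just obtained then confirms that no positive-dimensional component occurs. For degrees, $\deg\rho(X)=r$ is exactly as in Proposition~\ref{prop:geprofi}. Each $\rho(Y_\nu)$ has degree $s$, since $Y_\nu$ is a cone of degree $s$ in $\Lambda_\nu$ and a general projection preserves degree.

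The main obstacle is showing $\deg\rho(Y)=ms$, which requires the surfaces $\rho(Y_\nu)$ to be pairwise distinct with no common components. Because the orbits are pairwise distinct, the degree-$s$ cones $Y_\nu$ should be distinct surfaces. For general orbits the $3$-spaces $\Lambda_\nu=\PP(E_{p_\nu}\oplus E_{p_\nu})$ differ, since $E_p=\langle u,w\rangle$ determines the line through $p$ and is different for general distinct points. The cones lie in different $\Lambda_\nu$ and their intersections with $X$ are the disjoint sets $O_\nu$, so no irreducible component can be shared. If two orbits happened to share the same $\Lambda$, I would still need distinct components. In that case I would choose the cone vertices $P_\nu$ and curves $D_\nu$ independently and generically, so that no component is common. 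The degree of the union is then the sum $ms$, and a general projection, being birational onto its image on each surface and separating distinct components, preserves this.
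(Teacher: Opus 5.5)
Your proof is essentially the paper's: apply Proposition~\ref{prop:geprofi} to each orbit with a single general centre and take the union over $\nu$ (the paper also notes that the sets $\rho(O_\nu)$ are pairwise disjoint, which matters only for counting points), and your discussion of $\deg\rho(Y)=ms$ supplies a point the paper leaves implicit. Two minor remarks: first, the fallback for orbits sharing the same $\Lambda$ is unnecessary, because each irreducible component of $Y_\nu$ is a surface in $\Lambda_\nu\simeq\PP^3$, hence meets the lines $\Lambda_\nu\cap L$ and so meets $X$ in a nonempty subset of $O_\nu$, and disjointness of distinct orbits then already forbids shared components; second, the inequality $n+2\le 2n$ does not yield properness, since for $n\ge 3$ the expected dimension $2-n$ of $\rho(X)\cap\rho(Y)$ is negative (the same loose count appears in Proposition~\ref{prop:geprofi}), so the only meaningful sense of ``proper'' here is finiteness, which you correctly obtain from the set-theoretic equality.
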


\begin{proof}
Applying Proposition~\ref{prop:geprofi} to each orbit, we get
$\rho(O_\nu)=\rho(X)\cap\rho(Y_\nu)$ for all $\nu$.
Since the orbits are pairwise disjoint and $\rho$ is injective on the
finite set $Z=\bigcup_\nu O_\nu$, the sets $\rho(O_\nu)$ are pairwise
disjoint as well.
Taking the union over $\nu$ gives the claim.
\end{proof}

\begin{remark}
When $n=1$, the orbits are already geproci in $\PP^3_{\mathbb C}$ by
Proposition~\ref{prop:P3-geproci}, consistently with \cite{politus3}.
The argument above works for $n\ge 3$.
The obstruction in the case $n=2$ is only an obstruction to the present argument.
Indeed, the join $J(X\setminus Y_\nu,Y_\nu)$ may fill all of $\PP^5_{\mathbb C}$, so the secant-line dimension count no longer excludes extraneous intersections after projection.
However, this does not imply that such intersections must occur.
Thus the geprofi property remains open in general for $n=2$, although
it holds in Example~\ref{ex:12-points-P5} by a direct degree argument.
\end{remark}

\subsection{Towards non-cyclic finite groups in characteristic 0}\label{sec:non-cyclic0}
The results of Section~\ref{sec:char0} give a complete picture of the
finite cyclic case in characteristic $0$. We now turn to the question of
whether non-cyclic finite groups can arise.

At present we do not know any example of a non-cyclic finite group $G_{\mathcal L}$ when $n>1$.
The goal of this subsection is to collect obstructions which suggest that the finite case may be substantially more rigid than in $\PP^3_{\mathbb C}$.

The point is the following.
In the cyclic case, every normalized $4$-plane subconfiguration
\[
\{L_\infty,L_0,L_1,L_M\}
\]
forces the matrix $M$ to have at most two distinct eigenvalues.
If one wants to construct a finite non-cyclic group, one must therefore consider at least two matrices,
say $M$ and $N$, and require not only that $[M]$ and $[N]$ have finite order, but also that
$[M-I]$, $[N-I]$, and also the projective classes coming from the differences
\[
M-I,\qquad N-I,\qquad M-N
\]
must have finite order. Equivalently, after normalizing the mixed configuration, one is led to matrices
such as $(M-N)(I-N)^{-1}$, so the finite case imposes restrictions not only on
$M$ and $N$, but also on their interaction.

With our convention for the graphs $L_M$, the corresponding normalized matrix is
\[
X=(M-N)(I-N)^{-1}.
\]
Thus the finite case imposes simultaneous restrictions on $M$, $N$, and on the interaction between them.
In particular, if $M$ and $N$ have two-eigenvalue decompositions which are not compatible, then one expects
the matrix $X$, and already $M-N$ itself, to acquire three distinct eigenvalues.
This would contradict the necessary condition coming from the $4$-plane case.

For this reason, before attempting to construct non-cyclic finite groups, it is natural to study the spectrum of
$M-N$ when $M$ and $N$ each have exactly two distinct eigenvalues.

We recall a standard result.
\begin{lemma}\label{lem:two-eigenvalues-projector}
Let $M\in \GL_{n+1}(\mathbb C)$ be diagonalizable with exactly two distinct eigenvalues $a$ and $b$.
Let
\[
\mathbb C^{n+1}=E_a\oplus E_b
\]
be the corresponding eigenspace decomposition.
Then
\[
M=bI+(a-b)Q,
\]
where $Q\in \operatorname{End}(\mathbb C^{n+1})$ is the unique linear map such that
$Q(v)=v$ for every $v\in E_a$ and $Q(v)=0$ for every $v\in E_b$.
\end{lemma}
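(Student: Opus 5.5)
Since $M$ is diagonalizable with eigenvalues $a\neq b$, we have $\mathbb C^{n+1}=E_a\oplus E_b$. The map $Q$ is then just the projection onto $E_a$ along $E_b$. It is well defined and unique, because every $v\in\mathbb C^{n+1}$ decomposes uniquely as $v=v_a+v_b$ with $v_a\in E_a$ and $v_b\in E_b$. Setting $Q(v)=v_a$ gives a linear map with the required behaviour on each eigenspace. Any other linear map agreeing with these prescriptions on $E_a$ and on $E_b$ agrees with $Q$ on a spanning set, so it equals $Q$.

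To prove the identity $M=bI+(a-b)Q$, I would compare both sides on each summand, since two linear maps that agree on $E_a$ and on $E_b$ agree everywhere.
\begin{itemize}
\item For $v\in E_a$, the right-hand side gives $bv+(a-b)v=av=Mv$.
\item For $v\in E_b$, it gives $bv+0=bv=Mv$.
\end{itemize}
By linearity the identity holds on all of $\mathbb C^{n+1}$.

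There is no real obstacle here. The only point needing care is that diagonalizability is exactly what guarantees $E_a\oplus E_b$ is all of $\mathbb C^{n+1}$. Without that hypothesis, the generalized eigenspaces would not equal the eigenspaces and $Q$ would not be defined on the whole space by these rules. I would also note in passing that $Q^2=Q$, so $Q$ is an idempotent, and that $Q=(M-bI)/(a-b)$. This is the form used later when analysing the spectrum of $M-N$.
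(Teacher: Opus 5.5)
Your proposal is correct and follows essentially the same route as the paper: define $Q$ via the unique decomposition $v=v_a+v_b$, then check that $bI+(a-b)Q$ acts as $a$ on $E_a$ and as $b$ on $E_b$, so it agrees with $M$ on $E_a\oplus E_b=\mathbb C^{n+1}$. Your explicit uniqueness argument and the remarks $Q^2=Q$ and $Q=(M-bI)/(a-b)$ are correct additions that the paper leaves implicit.
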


\begin{proof}
This is standard, but we include the argument for the reader's convenience.
The map $Q$ is well defined because every vector $v\in \mathbb C^{n+1}$ can be written uniquely as
$v=v_a+v_b$ with $v_a\in E_a$ and $v_b\in E_b$.
By definition, $Q(v)=v_a$.

The endomorphism $bI+(a-b)Q$ acts as multiplication by $a$ on $E_a$ and
as multiplication by $b$ on $E_b$. Hence it agrees with $M$ on the direct
sum $E_a\oplus E_b=\mathbb C^{n+1}$. Therefore $M=bI+(a-b)Q.$
\end{proof}

\begin{lemma}\label{lem:spectrum-difference-general}
Let $M=bI+\alpha Q$ and $N=dI+\beta P$ be matrices in
$\GL_{n+1}(\mathbb C)$, where $P,Q\in \operatorname{End}(\mathbb C^{n+1})$
satisfy $P^2=P$ and $Q^2=Q$. Then
\[
M-N=(b-d)I+(\alpha Q-\beta P).
\]
Moreover, every vector in $\ker(P)\cap\ker(Q)$ is an eigenvector of $M-N$
with eigenvalue $b-d$. Finally, if $\lambda\neq b-d$ is an eigenvalue of
$M-N$, then $\lambda-(b-d)$ is an eigenvalue of the restriction of
\[
\alpha Q-\beta P
\]
to the invariant subspace
\[
\operatorname{Im}(P)+\operatorname{Im}(Q).
\]
\end{lemma}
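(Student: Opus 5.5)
The first identity is immediate: subtracting $N=dI+\beta P$ from $M=bI+\alpha Q$ gives
\[
M-N=(b-d)I+(\alpha Q-\beta P),
\]
so I would just record it. Set $R=\alpha Q-\beta P$. For the second claim, take $v\in\ker(P)\cap\ker(Q)$. Then $Rv=\alpha Qv-\beta Pv=0$, hence $(M-N)v=(b-d)v$, so $v$ is an eigenvector of $M-N$ with eigenvalue $b-d$ whenever $v\neq 0$.

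For the last claim, let $S=\operatorname{Im}(P)+\operatorname{Im}(Q)$. I will first check that $S$ is $R$-invariant, which is clear since for any $x$ we have $Rx=\alpha Qx-\beta Px\in\operatorname{Im}(Q)+\operatorname{Im}(P)$. In fact $\operatorname{Im}(R)\subseteq S$, so every vector, not only those of $S$, is mapped into $S$.

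Now let $\lambda\neq b-d$ be an eigenvalue of $M-N$. Set $\mu=\lambda-(b-d)\neq 0$ and pick $v\neq 0$ with $(M-N)v=\lambda v$, equivalently $Rv=\mu v$. Then $v=\mu^{-1}Rv\in\operatorname{Im}(R)\subseteq S$. So $v$ is a nonzero vector of $S$ with $R|_S v=\mu v$, and therefore $\mu$ is an eigenvalue of $R|_S$, as required.

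There is no real obstacle here. The only point needing care is the key observation that $\mu\neq 0$ forces the eigenvector into $\operatorname{Im}(R)\subseteq S$. The idempotency hypotheses $P^2=P$ and $Q^2=Q$ are not even needed for these statements; they matter only for the subsequent spectral analysis on $S$.
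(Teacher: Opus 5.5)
Your proof is correct and follows the paper's argument essentially verbatim: the identity is immediate, vectors in $\ker(P)\cap\ker(Q)$ are killed by $\alpha Q-\beta P$, and an eigenvector for $\lambda\neq b-d$ satisfies $v=\mu^{-1}(\alpha Q-\beta P)v\in\operatorname{Im}(P)+\operatorname{Im}(Q)$. Your side remarks are also accurate and are left implicit in the paper: $v$ must be nonzero to count as an eigenvector, and idempotency is not used in this lemma.
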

\begin{proof}
The identity is immediate.

Let $v\in\ker(P)\cap\ker(Q)$. Then $Pv=Qv=0$, hence
\[
(M-N)v=(b-d)v.
\]
Thus every vector in $\ker(P)\cap\ker(Q)$ is an eigenvector of $M-N$ with
eigenvalue $b-d$.

The subspace $\operatorname{Im}(P)+\operatorname{Im}(Q)$ is invariant under
$\alpha Q-\beta P$, since $P$ and $Q$ have images contained in this
subspace.

Now let $\lambda\neq b-d$ be an eigenvalue of $M-N$, and let $v\neq 0$
be a corresponding eigenvector. Then
\[
(\alpha Q-\beta P)v=(\lambda-(b-d))v.
\]
The left-hand side lies in $\operatorname{Im}(P)+\operatorname{Im}(Q)$, and
$\lambda-(b-d)\neq 0$. Hence
\[
v\in \operatorname{Im}(P)+\operatorname{Im}(Q).
\]
Therefore $\lambda-(b-d)$ is an eigenvalue of the restriction of
$\alpha Q-\beta P$ to $\operatorname{Im}(P)+\operatorname{Im}(Q)$.
\end{proof}

\begin{corollary}\label{cor:spectrum-difference}
Let $M=bI+\alpha Q$ and $N=dI+\beta P$ be matrices in $\GL_3(\mathbb C)$, where
$P,Q\in \operatorname{End}(\mathbb C^3)$ satisfy $P^2=P$, $Q^2=Q$, and
$\operatorname{rank}(P)=\operatorname{rank}(Q)=1$.
Then the eigenvalues of $M-N$ are
\[
b-d,\qquad b-d+\lambda_+,\qquad b-d+\lambda_-,
\]
where $\lambda_+,\lambda_-$ are the roots of 
$\lambda^2-(\alpha-\beta)\lambda+\alpha\beta(\operatorname{tr}(PQ)-1)=0.$

In particular, $M-N$ has at most two distinct eigenvalues if and only if either
\[
0\in\{\lambda_+,\lambda_-\}
\qquad \text{or}\qquad
\lambda_+=\lambda_-.
\]
Equivalently, this happens if and only if either
\[
\alpha\beta(\operatorname{tr}(PQ)-1)=0
\qquad \text{
or}
\qquad
(\alpha-\beta)^2-4\alpha\beta(\operatorname{tr}(PQ)-1)=0.
\]
\end{corollary}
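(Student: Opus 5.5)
By Lemma~\ref{lem:spectrum-difference-general}, $M-N=(b-d)I+(\alpha Q-\beta P)$. So the plan is to compute the spectrum of $R:=\alpha Q-\beta P$ and then shift it by $b-d$. Since $P$ and $Q$ have rank one, I will write $P=xy^{T}$ and $Q=zw^{T}$ with $y^{T}x=1$ and $w^{T}z=1$; this normalization is exactly the idempotency condition. Then $\operatorname{tr}(PQ)=(y^{T}z)(w^{T}x)$.

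I split into two cases according to whether $\operatorname{Im}(P)+\operatorname{Im}(Q)=\langle x,z\rangle$ has dimension $2$ or $1$.

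In the first case, $\dim(\ker P\cap\ker Q)\ge 1$, so $b-d$ is an eigenvalue of $M-N$ with an eigenvector in $\ker P\cap\ker Q$. The remaining two eigenvalues come from $R$ restricted to the invariant plane $\langle x,z\rangle$. In the basis $x,z$ we have
\[Rx=\alpha(w^{T}x)z-\beta x,\qquad Rz=\alpha z-\beta(y^{T}z)x.\]
Hence $R|_{\langle x,z\rangle}$ has matrix $\begin{pmatrix}-\beta & -\beta y^{T}z\\ \alpha w^{T}x & \alpha\end{pmatrix}$. Its trace is $\alpha-\beta$, and its determinant is
\[-\alpha\beta+\alpha\beta(y^{T}z)(w^{T}x)=\alpha\beta(\operatorname{tr}(PQ)-1).\]
Its eigenvalues $\lambda_\pm$ are therefore the roots of the stated quadratic. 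Since $\mathbb C^3=\langle x,z\rangle\oplus(\text{a complement in }\ker P\cap\ker Q)$ is not quite a direct sum decomposition invariant in general, I will phrase the conclusion through the characteristic polynomial. Choose a basis $x,z,e$ with $e\in\ker P\cap\ker Q$. Then $Re=0$, so the matrix of $R$ is block triangular, and its characteristic polynomial is $\lambda(\lambda^2-(\alpha-\beta)\lambda+\alpha\beta(\operatorname{tr}PQ-1))$. Shifting by $b-d$ gives the three listed eigenvalues with multiplicity.

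In the degenerate case $\langle x\rangle=\langle z\rangle$, $R$ has rank at most one and is supported on $x$. I will check the formula directly. We may take $z=x$, so $w^{T}x=1$ and $\operatorname{tr}(PQ)=y^{T}x\cdot w^{T}x=1$, and the quadratic becomes $\lambda(\lambda-(\alpha-\beta))$. On the other side, $R=x(\alpha w-\beta y)^{T}$ has eigenvalues $0,0,(\alpha w-\beta y)^{T}x=\alpha-\beta$. These agree with the claimed list $0,\lambda_+,\lambda_-$. This case is the only mild obstacle: Lemma~\ref{lem:spectrum-difference-general} alone does not give multiplicities, so the characteristic polynomial computation is needed to be safe.

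Finally, the three values $b-d$, $b-d+\lambda_\pm$ have at most two distinct members iff two of $0,\lambda_+,\lambda_-$ coincide. That happens iff $0\in\{\lambda_+,\lambda_-\}$ or $\lambda_+=\lambda_-$. In terms of coefficients, this means either the constant term $\alpha\beta(\operatorname{tr}PQ-1)$ vanishes or the discriminant $(\alpha-\beta)^2-4\alpha\beta(\operatorname{tr}PQ-1)$ vanishes.
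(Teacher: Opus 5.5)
There is one step in your argument that fails as written. In Case~1 you cannot always choose a basis $x,z,e$ with $e\in\ker P\cap\ker Q$, because this intersection can be a line lying inside $\langle x,z\rangle$. For $e=ax+cz$, the conditions $y^Te=w^Te=0$ read $a+c\,y^Tz=0$ and $a\,w^Tx+c=0$. These have a nonzero solution exactly when $(y^Tz)(w^Tx)=\operatorname{tr}(PQ)=1$. So if $x,z$ are independent, $\ker P\neq\ker Q$ and $\operatorname{tr}(PQ)=1$, then $\ker P\cap\ker Q\subset\langle x,z\rangle$. Concretely, $x=e_1$, $y=(1,1,0)^T$, $z=e_2$, $w=(1,1,1)^T$ give
\[
P=\begin{pmatrix}1&1&0\\0&0&0\\0&0&0\end{pmatrix},\qquad Q=\begin{pmatrix}0&0&0\\1&1&1\\0&0&0\end{pmatrix},\qquad \ker P\cap\ker Q=\langle e_1-e_2\rangle .
\]
For $\alpha\beta\neq 0$ the matrix $R=\alpha Q-\beta P$ then has characteristic polynomial $\lambda^2(\lambda-(\alpha-\beta))$ but kernel only $\langle e_1-e_2\rangle$, so no vector with $Re=0$ completes $x,z$ to a basis. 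This is not a negligible corner: it is exactly the situation $\operatorname{tr}(PQ)=1$ that the statement and the subsequent remark single out. The repair is easy: take \emph{any} $e\notin\langle x,z\rangle$. Since $\operatorname{Im}R\subseteq\langle x,z\rangle$, the entire third row of the matrix of $R$ in the basis $x,z,e$ vanishes. Hence $\chi_R(\lambda)=\lambda\,\chi_{R|_{\langle x,z\rangle}}(\lambda)$, with no reference to $\ker P\cap\ker Q$. With this change, your $2\times 2$ computation, the degenerate case $\langle x\rangle=\langle z\rangle$, and the final equivalences are all correct.

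The paper's proof avoids both the case split and this issue. It only uses that $A=\alpha Q-\beta P$ is singular (because $\ker P\cap\ker Q\neq 0$), so $0$ is one root of $\chi_A$. It then determines the remaining two roots from the identities $\operatorname{tr}A=\alpha-\beta$ and $\operatorname{tr}A^2=\alpha^2+\beta^2-2\alpha\beta\operatorname{tr}(PQ)$, which hold uniformly. Your route is more explicit: it produces the matrix of $A$ on $\operatorname{Im}P+\operatorname{Im}Q$ and exhibits the factorization $\operatorname{tr}(PQ)=(y^Tz)(w^Tx)$ used in the remark following the corollary. The cost is the case analysis and the basis bookkeeping, which is where the slip occurred.
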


\begin{proof}
Set $A=\alpha Q-\beta P$. By Lemma~\ref{lem:spectrum-difference-general},
every vector in $\ker(P)\cap\ker(Q)$ is an eigenvector of $M-N$ with
eigenvalue $b-d$. Since $P$ and $Q$ have rank $1$ on $\mathbb C^3$,
we have $\dim(\ker(P)\cap\ker(Q))\geq 1$. Thus $0$ is an eigenvalue of
$A$, and $b-d$ is an eigenvalue of $M-N$.

We compute the remaining two eigenvalues of $A$. Since the ranks of $P$ and $Q$ are 1, we have
$\operatorname{tr}(P)=\operatorname{tr}(Q)=1$, and therefore
$\operatorname{tr}(A)=\alpha-\beta$. Moreover,
\[
A^2=(\alpha Q-\beta P)^2
=\alpha^2Q+\beta^2P-\alpha\beta(QP+PQ).
\]
Taking traces and using $\operatorname{tr}(PQ)=\operatorname{tr}(QP)$, we get
\[
\operatorname{tr}(A^2)
=\alpha^2+\beta^2-2\alpha\beta\,\operatorname{tr}(PQ).
\]
Let the eigenvalues of $A$ be $0,\lambda_+,\lambda_-$. Then
\[
\lambda_++\lambda_-=\alpha-\beta,
\qquad
\lambda_+^2+\lambda_-^2
=\alpha^2+\beta^2-2\alpha\beta\,\operatorname{tr}(PQ).
\]
It follows that
$\lambda_+\lambda_-=
\alpha\beta(\operatorname{tr}(PQ)-1).$

Hence $\lambda_+,\lambda_-$ are the roots of
$\lambda^2-(\alpha-\beta)\lambda+\alpha\beta(\operatorname{tr}(PQ)-1)=0.$
Since $M-N=(b-d)I+A$, the eigenvalues of $M-N$ are
\[
b-d,\qquad b-d+\lambda_+,\qquad b-d+\lambda_-.
\]

Finally, $M-N$ has at most two distinct eigenvalues exactly when either one
of $\lambda_+,\lambda_-$ is equal to $0$, or when
$\lambda_+=\lambda_-$. These conditions are respectively equivalent to
\[
\alpha\beta(\operatorname{tr}(PQ)-1)=0
\qquad
\text{or}
\qquad
(\alpha-\beta)^2-4\alpha\beta(\operatorname{tr}(PQ)-1)=0.
\]
\end{proof}

\begin{remark}
By Corollary~\ref{cor:spectrum-difference}, one particularly natural way for
$M-N$ to have at most two distinct eigenvalues is that one of the two roots
above be equal to $0$, that is,
\[
\operatorname{tr}(PQ)=1.
\]

The condition $\operatorname{tr}(PQ)=1$ is not determined only by the image
lines of the rank-one projectors $P$ and $Q$. Indeed, if
\[
P=u\otimes\varphi,\qquad Q=v\otimes\psi,
\qquad \varphi(u)=\psi(v)=1,
\]
then
\[
\operatorname{tr}(PQ)=\varphi(v)\psi(u).
\]
Thus this condition also depends on the relative position of the kernels of
the two projectors.
\end{remark}

The next example shows that the most naive attempt to construct a finite non-cyclic group already fails.
\begin{example}[A first non-example]\label{ex:non-example}
The previous discussion suggests that, in order to construct a finite non-cyclic group in $\PP^5_{\mathbb C}$, one should try two matrices
with exactly two eigenvalues, having the same simple eigenspace but different double eigenspaces.

Let $\omega$ be a primitive third root of unity, and consider the rank-one idempotents
\[
Q=
\begin{pmatrix}
1&0&0\\
0&0&0\\
0&0&0
\end{pmatrix},
\qquad
P=
\begin{pmatrix}
1&1&0\\
0&0&0\\
0&0&0
\end{pmatrix}.
\]
Then $\operatorname{Im}(P)=\operatorname{Im}(Q)=\langle e_1\rangle$, but $\ker(P)\neq \ker(Q)$.

Now define
\[
M=\omega I+(\omega^2-\omega)Q
=
\begin{pmatrix}
\omega^2&0&0\\
0&\omega&0\\
0&0&\omega
\end{pmatrix},
\]
and
\[
N=\omega^2 I+(\omega-\omega^2)P
=
\begin{pmatrix}
\omega&\omega-\omega^2&0\\
0&\omega^2&0\\
0&0&\omega^2
\end{pmatrix}.
\]
Both matrices have exactly two eigenvalues. More precisely, $M$ has eigenvalues
$\omega^2,\omega,\omega$, while $N$ has eigenvalues $\omega,\omega^2,\omega^2$.

Moreover,
\[
M-N=
\begin{pmatrix}
\omega^2-\omega&\omega^2-\omega&0\\
0&\omega-\omega^2&0\\
0&0&\omega-\omega^2
\end{pmatrix}.
\]
Thus $M-N$ also has at most two distinct eigenvalues. In fact, after scaling
by $\omega^2-\omega$, it is projectively represented by
\[
\begin{pmatrix}
1&1&0\\
0&-1&0\\
0&0&-1
\end{pmatrix},
\]
which has square equal to the identity.

At first sight, this makes $(M,N)$ a plausible candidate for a finite
non-cyclic example. However, after rescaling projectively, one has
\[
[M]=
\left[
\begin{pmatrix}
\omega&0&0\\
0&1&0\\
0&0&1
\end{pmatrix}
\right],
\qquad
[N]=
\left[
\begin{pmatrix}
\omega^2&u&0\\
0&1&0\\
0&0&1
\end{pmatrix}
\right],
\]
where $u=\omega^2-1\neq 0$. Their product is
\[
[M][N]
=
\left[
\begin{pmatrix}
1&\omega u&0\\
0&1&0\\
0&0&1
\end{pmatrix}
\right].
\]
This is a nontrivial unipotent element of $\PGL_3(\mathbb C)$, hence it has
infinite order. Therefore the subgroup generated by $[M]$ and $[N]$ is
infinite.\end{example}

\begin{example}[Non-example with finite-order generators]\label{ex:non-example-fin-ord-gens}
    Let $\omega$, $P$, and $Q$ be as in the previous example. Define \[
    M=-\omega^2 I+(-\omega+\omega^2)Q=\begin{pmatrix}
        -\omega&0&0\\
        0&-\omega^2&0\\
        0&0&-\omega^2\\
    \end{pmatrix}
    \] and \[
    N=\omega^2 I+(\omega-\omega^2)P=\begin{pmatrix}
\omega&\omega-\omega^2&0\\
0&\omega^2&0\\
0&0&\omega^2
\end{pmatrix}.
    \] Then all the generators of the group $[M]$, $[N]$, $[M-I]$, $[N-I]$, and $[M-N]$ have finite order. The group $G_{\mathcal{L}}$ is still infinite, as one can detect by observing that the order of $[M-I][N]$ is infinite.
\end{example}

These observations suggest that, in characteristic $0$, the cyclic case may in fact exhaust all finite possibilities.

\begin{conjecture}\label{conj:finite-implies-cyclic}
Let $\mathcal L=\{L_\infty,L_0,L_1,\dots,L_r\}\subset \PP^{2n+1}_{\mathbb C}$, with $n>1$.
If $G_{\mathcal L}$ is finite, then $G_{\mathcal L}$ is cyclic.
\end{conjecture}

\section{The group of the groupoid in positive characteristic}\label{sec:charp}
We now turn to positive characteristic, where the rigidity of the characteristic-zero case breaks down.
Even finite cyclic groups may give rise to non-collinear orbits on the individual planes, and finite non-cyclic groups also occur.
The examples below show that the geometry in positive characteristic is genuinely different.
\subsection{Cyclic groups in positive characteristic}

The next example shows a behavior in sharp contrast with the characteristic-zero case.
Over $\mathbb C$, the finiteness of both $[M]$ and $[M-I]$ forces $M$ to have at most two distinct eigenvalues, hence a general orbit on each plane is collinear.
In characteristic $2$, this argument breaks down: we construct a matrix $M$ with finite projective order, and so does $[M+I]=[M-I]$, but $M$ has three distinct eigenvalues over $K$.
Therefore the action on each plane has three distinct eigenvalues, and a general orbit is no longer contained in a line.
\begin{example}[Characteristic $2$: four planes in $\PP^5_K$ defining the group $C_7$]
\label{ex:char2-2}
Assume $K$ is an algebraically closed field with $\operatorname{char}(K)=2$. Let
\[
M=\begin{pmatrix}
1&0&1\\
0&0&1\\
1&1&0
\end{pmatrix}
\in \GL_3(\mathbb F_2)\subset \GL_3(K).
\]
Let
\[
\mathcal L=\{L_0,L_\infty,L_I,L_M\}.
\]
A direct computation gives $M^7=I$ and $M+I=M^5$. Hence
\[
G_{\mathcal L}
=
\langle [M],[M+I]\rangle
=
\langle [M]\rangle
\subseteq \PGL_3(K).
\]
Therefore $G_{\mathcal L}\cong C_7$. In particular,
\[
|G_{\mathcal L}|=7
\]
and, for a point $p$ not on a transversal plane to $\mathcal L$, the orbit
$[p]_{\mathcal C_{\mathcal L}}$ has $28$ points.

We checked with Macaulay2, using random points to construct the orbit and then
projecting, that
\[
Z=[p]_{\mathcal C_{\mathcal L}}\subseteq \PP^5_K
\]
has Hilbert function
\[
h_Z=(1,5,15,7),
\qquad
h_{Z\cap L_0}=(1,2,3,1).
\]

On each plane $L\in\mathcal L$, the seven points of a general orbit slice
form a Fano plane configuration. Indeed, the minimal polynomial of $M$ is
$t^3+t^2+1$, hence
\[
M^3+M^2+I=0.
\]
Thus, for every vector $v$, the three points
\[
[v],\ [M^2v],\ [M^3v]
\]
are collinear, whenever they are distinct. 
Applying $M^k$
to the relation $M^3v=M^2v+v$ gives $M^{k+3}v=M^{k+2}v+M^kv$, so the triple $\{[M^k v], [M^{k+2} v], [M^{k+3} v]\}$ is collinear for every $k.$ Since the orbit has exactly seven points and the group has order 7, cycling through $k = 0, 1, \ldots, 6,$ $\mod 7$ gives exactly 7 collinear triples,  precisely the seven lines of the Fano plane. 
\end{example}
\begin{remark}
Example~\ref{ex:char2-2} is closely related to Ganger’s $\PP^5_K$ example \cite[Example 5.1]{ganger2024}: both arise from the Singer-cycle picture and lead to a cyclic group of order $7$ acting on planes in $\PP^5_K$. However, Ganger considers the canonical full spread and its transversals, while Example~\ref{ex:char2-2} isolates a $4$-plane subconfiguration and emphasizes the non-collinear orbit phenomenon in positive characteristic.
\end{remark}

\begin{remark}\label{rem:positive-char-difference}
Example~\ref{ex:char2-2} shows that the characteristic-zero picture does not
extend to positive characteristic.

Indeed, over $\mathbb C$, if
\[
\mathcal L=\{L_0,L_\infty,L_I,L_M\}
\]
and $G_{\mathcal L}$ is finite cyclic, then both $[M]$ and $[M-I]$ have
finite order in $\PGL_3(\mathbb C)$. This forces $M$ to have at most
two distinct eigenvalues. As a consequence, the orbit of a general point on
each plane is collinear.

In characteristic $2$, this argument breaks down. In
Example~\ref{ex:char2-2}, one has
\[
[M+I]=[M-I]=[M^5],
\]
so both $[M]$ and $[M-I]$ have finite order. However,
\[
\chi_M(t)=t^3+t^2+1
\]
is irreducible over $\mathbb F_2$. Hence $M$ has three distinct eigenvalues
over $\overline K$. Therefore the induced action on each plane is represented
by a matrix with three distinct eigenvalues, and a general orbit is not
contained in a line.

This explains why, in positive characteristic, finite cyclic groups may give
rise to non-collinear orbit slices on the individual planes. In particular, the
$h$-vector
\[
h_{Z\cap L_0}=(1,2,3,1)
\]
is consistent with the fact that the seven points of the orbit on
$L_0\simeq\PP^2_K$ are not collinear.
\end{remark}
Example~\ref{ex:char2-2} is the first instance of a general construction.
The following proposition shows that Singer cycles systematically produce
finite cyclic groups with non-collinear orbit slices in every positive
characteristic.

Let $q=p^e$, and view $\mathbb F_{q^3}$ as a $3$-dimensional vector space over $\mathbb F_q$.
Multiplication by a nonzero element $\alpha\in \mathbb F_{q^3}^*$ defines an $\mathbb F_q$-linear automorphism of
$\mathbb F_{q^3}$, hence an element of $\GL_3(\mathbb F_q)$ after a choice of basis.
Passing to projective classes, one obtains an embedding
\[
\mathbb F_{q^3}^*/\mathbb F_q^* \hookrightarrow \PGL_3(\mathbb F_q),
\]
whose image is a cyclic subgroup of order $q^2+q+1$.
Such a subgroup is called a \emph{Singer cycle}; see, for instance,
\cite[Chapter 2]{hirschfeld-thas}.

\begin{proposition}\label{prop:positive-char-singer}
Let $q=p^e$, let $K$ be an algebraically closed field of characteristic
$p$, and identify $\mathbb F_{q^3}$ with a $3$-dimensional vector space
over $\mathbb F_q$. Choose
$\alpha\in \mathbb F_{q^3}^*\setminus \mathbb F_q^*$
whose class generates the quotient group 
$\mathbb F_{q^3}^*/\mathbb F_q^*,$ 
which is cyclic of order $q^2+q+1$. Let
$M_\alpha\in \GL_3(\mathbb F_q)\subset \GL_3(K)$ be the matrix of the
$\mathbb F_q$-linear map $x\mapsto \alpha x$.

Let $\mathcal L=\{L_0,L_\infty,L_1,L_{M_\alpha}\}\subset \PP^5_K.$
 Then $L_0$, $L_\infty$, $L_1$, and $L_{M_\alpha}$ are pairwise skew, and
\[
G_{\mathcal L}
=
\langle [M_\alpha],[M_\alpha-I]\rangle
\cong C_{q^2+q+1}.
\]

Moreover, $M_\alpha$ has three distinct eigenvalues over $K$, namely
$\alpha, \alpha^q, \alpha^{q^2}.$ 
Hence, for a general point $p$ on any member of $\mathcal L$, the orbit of
$p$ on that plane is not collinear; indeed, it spans the whole plane.
\end{proposition}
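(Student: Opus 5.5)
We plan to work entirely inside the field $\mathbb F_{q^3}$, identified with $\mathbb F_q^3$, so that every matrix appearing in the configuration is a multiplication operator. The key observation is that $M_\alpha - cI = M_{\alpha-c}$ for every $c\in\mathbb F_q$, since $x\mapsto \alpha x - cx = (\alpha-c)x$. In particular $M_0=0$, $M_1=I$ and $M_\alpha$ all lie in the commutative image of $\mathbb F_{q^3}$ in $\mathrm{End}_{\mathbb F_q}(\mathbb F_{q^3})$.

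\emph{Pairwise skewness.} By Lemma~\ref{lem:skew-criterion}, together with the fact that a graph $L_M$ with $M$ invertible is automatically skew to $L_0$ and $L_\infty$, it suffices to check two things. First, $M_\alpha$ is invertible, because $\alpha\neq 0$. Second, $M_\alpha-I=M_{\alpha-1}$ is invertible, because $\alpha\notin\mathbb F_q$ gives $\alpha\neq1$. Skewness of $L_0$, $L_\infty$ and $L_I$ among themselves is immediate.

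\emph{The group.} By Corollary~\ref{cor:generators-by-differences}, $G_{\mathcal L}$ is generated by the classes of the differences, namely $[M_\alpha]$, $[M_\alpha-I]$, $[I]$ and their inverses. So $G_{\mathcal L}=\langle[M_\alpha],[M_{\alpha-1}]\rangle$. Both generators lie in the image of the injective homomorphism $\mathbb F_{q^3}^*/\mathbb F_q^*\hookrightarrow\PGL_3(\mathbb F_q)\subset\PGL_3(K)$. Injectivity holds because $M_\beta$ is scalar exactly when $\beta\in\mathbb F_q$, and $\PGL_3(\mathbb F_q)\to\PGL_3(K)$ is injective since scalars in $\GL_3(K)$ lying in $\GL_3(\mathbb F_q)$ are $\mathbb F_q$-scalars. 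The class of $\alpha$ generates this cyclic group of order $q^2+q+1$, so $[M_\alpha]$ alone generates the whole image, which contains $[M_{\alpha-1}]$. Hence $G_{\mathcal L}=\langle[M_\alpha]\rangle\cong C_{q^2+q+1}$.

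\emph{Eigenvalues and orbit span.} Since $\alpha\notin\mathbb F_q$ and $[\mathbb F_{q^3}:\mathbb F_q]=3$ is prime, $\alpha$ has degree $3$ over $\mathbb F_q$. Its minimal polynomial is then the characteristic polynomial of $M_\alpha$, with roots $\alpha,\alpha^q,\alpha^{q^2}$. These roots are pairwise distinct: if $\alpha^{q}=\alpha$ or $\alpha^{q^2}=\alpha$, then $\alpha$ would lie in $\mathbb F_q$ or in $\mathbb F_{q^2}\cap\mathbb F_{q^3}=\mathbb F_q$. So $M_\alpha$ is diagonalizable over $K$ with three one-dimensional eigenspaces.

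Lemma~\ref{lem:general-cyclic-orbit-span} is stated over $\mathbb C$, but its Vandermonde argument is field-independent and only needs diagonalizability with distinct eigenvalues. Applying it to the generator $[M_\alpha]$ shows that a general point $[v]$, with all three eigencomponents nonzero, has $G_{\mathcal L}$-orbit spanning $\PP^2$. Note that no finite-order argument is needed for diagonalizability here, since distinct eigenvalues suffice.

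On $L_\infty$ the slice $[p]_{\mathcal L}\cap L_\infty$ is the $G_{\mathcal L}$-orbit, as in Example~\ref{ex:infinite-group-finite-orbit}. Lemma~\ref{lem:slices-projectively-equivalent} then transfers this to every member of $\mathcal L$: slices are images under projectivities, so each spans its plane and in particular is not collinear.

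\emph{Expected obstacle.} The only real subtlety is identifying the orbit slice with the $G_{\mathcal L}$-orbit under the chosen identification of $L_\infty$ with $\PP^2$, and transporting "general" between planes. This is handled by Lemma~\ref{lem:slices-projectively-equivalent}, because the groupoid morphisms are projective isomorphisms and send general points to general points.
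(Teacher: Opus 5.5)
Your proposal is correct and follows the paper's proof essentially step for step: invertibility of $M_\alpha$ and $M_\alpha-I=M_{\alpha-1}$ gives skewness, both generating classes lie in the Singer cycle generated by $[M_\alpha]$, and the distinct Frobenius-conjugate eigenvalues feed the Vandermonde argument of Lemma~\ref{lem:general-cyclic-orbit-span} over $K$. Your extra remarks only make explicit what the paper leaves implicit: injectivity of $\PGL_3(\mathbb F_q)\to\PGL_3(K)$, and transferring genericity between planes via Lemma~\ref{lem:slices-projectively-equivalent}. For completeness, the remaining case $\alpha^q=\alpha^{q^2}$ of the distinctness check also reduces to $\alpha\in\mathbb F_q$, since Frobenius is injective.
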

\begin{proof}
Since $\alpha\neq 0$, the map $x\mapsto \alpha x$ is invertible, so
$M_\alpha\in\GL_3(\mathbb F_q)$. Also $\alpha\neq 1$, because
$\alpha\notin\mathbb F_q$. Hence $M_\alpha-I$ is invertible: it is the
matrix of the map $x\mapsto(\alpha-1)x$. Therefore $L_{M_\alpha}$ is skew
to $L_0$, $L_\infty$, and $L_1$, and the four planes are pairwise skew.

The projective class $[M_\alpha]$ depends only on the class of $\alpha$ in
$\mathbb F_{q^3}^*/\mathbb F_q^*.$

Likewise, $[M_\alpha-I]$ is the class of multiplication by $\alpha-1$, so it
also lies in the same cyclic subgroup
$\mathbb F_{q^3}^*/\mathbb F_q^*\subset \PGL_3(K).$
By assumption, the class of $\alpha$ generates this quotient. Hence
\[
\langle [M_\alpha],[M_\alpha-I]\rangle
=
\langle [M_\alpha]\rangle
\cong C_{q^2+q+1}.
\]

Finally, over $K$, the eigenvalues of $M_\alpha$ are the Frobenius
conjugates
$\alpha, \alpha^q, \alpha^{q^2}.$ 
They are distinct because $\alpha\notin \mathbb F_q$. Thus the induced
projective action on each plane has three distinct eigenspaces. Although Lemma~\ref{lem:general-cyclic-orbit-span} is stated over $\mathbb{C}$, its proof applies over any algebraically closed field whenever the lift  $A$ is diagonalizable, which holds here since $M_\alpha$ is semisimple with three distinct eigenvalues, therefore the orbit of a general point spans a
projective plane. In particular, it is not contained in a line.
\end{proof}

\subsection{Non-cyclic groups in positive characteristic}
We now pass from cyclic examples to a genuinely non-cyclic finite group in characteristic $2$.
\begin{example}[Characteristic $2$: a small finite example in $\PP^5_K$]\label{ex:char2}
Assume $K$ is an algebraically closed field with $\operatorname{char}(K)=2$. Let
\[
M=\begin{pmatrix}
1&0&1\\
0&0&1\\
1&1&0
\end{pmatrix},
\qquad
N=\begin{pmatrix}
1&1&1\\
1&0&1\\
0&1&1
\end{pmatrix}
\in \GL_3(\mathbb F_2)\subset \GL_3(K).
\]
Let
\[
\mathcal L=\{L_0,L_\infty,L_I,L_M,L_N\}\subset \PP^5_K.
\]

Since $\operatorname{char}(K)=2$, one has $M-I=M+I$, $N-I=N+I$, and $M-N=M+N$.
A direct computation gives
\[
M+I=\begin{pmatrix}
0&0&1\\
0&1&1\\
1&1&1
\end{pmatrix},\quad
N+I=\begin{pmatrix}
0&1&1\\
1&1&1\\
0&1&0
\end{pmatrix},\quad
M+N=\begin{pmatrix}
0&1&0\\
1&0&0\\
1&0&1
\end{pmatrix}.
\]
All five matrices $M$, $N$, $M+I$, $N+I$, and $M+N$ are invertible over $\mathbb F_2$, hence define elements of $\PGL_3(K)$.

A computation in Macaulay2 shows that the projective classes $[M]$ and $[N]$ generate
$\PGL_3(\mathbb F_2)$.
Since $\mathbb F_2^*=\{1\}$, one has
\[
\PGL_3(\mathbb F_2)=\GL_3(\mathbb F_2).
\]
Hence
\[
G_{\mathcal L}\cong \PGL_3(\mathbb F_2)\cong \GL_3(\mathbb F_2),
\]
so in particular $|G_{\mathcal L}|=168$.
Moreover,
\[
\GL_3(\mathbb F_2)\cong \PSL_2(\mathbb F_7).
\]
\end{example}

\begin{proposition}\label{prop:not-pgl2-char2}
In characteristic $2$, the group $G_{\mathcal L}$ of Example~\ref{ex:char2} does not embed in $\PGL_2(K)$.
\end{proposition}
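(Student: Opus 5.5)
We will show that $\GL_3(\mathbb F_2)\cong\PSL_2(\mathbb F_7)$, a simple group of order $168$, admits no injective homomorphism into $\PGL_2(K)$ when $K$ is algebraically closed of characteristic $2$. Our approach is to use the known classification of finite subgroups of $\PGL_2(K)$ in positive characteristic (Dickson's theorem). We would also keep in hand an elementary argument based on the structure of $2$-subgroups, in case a more self-contained proof is preferred.

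\textbf{The direct argument.} First note that in characteristic $2$ every element of $K$ is a square, since $K$ is algebraically closed. Hence $\PGL_2(K)=\PSL_2(K)$, and $\PGL_2(K)$ is the image of $\SL_2(K)$. Next we analyse involutions. An element of order $2$ in $\PGL_2(K)$ lifts to $A$ with $A^2=\lambda I$; rescaling by $\sqrt{\lambda}$, we get $A^2=I$, so $(A+I)^2=0$. Since $A\neq I$, the matrix $A$ is a nontrivial transvection, unipotent with a unique fixed point on $\PP^1_K$. So every involution is conjugate to $u(t)=\begin{pmatrix}1&t\\0&1\end{pmatrix}$ for some $t\neq0$.

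From this we derive the key structural fact. Two commuting involutions fix each other's fixed points, so they share the fixed point $\infty$. Consequently any finite $2$-subgroup of $\PGL_2(K)$ consists of unipotent elements fixing a common point, and is therefore conjugate into the additive group $\{u(t)\}\cong(K,+)$. This group is elementary abelian. In particular, every finite $2$-subgroup of $\PGL_2(K)$ is elementary abelian.

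Finally, we compare with $\GL_3(\mathbb F_2)$. Its Sylow $2$-subgroup, the group of upper unitriangular $3\times3$ matrices over $\mathbb F_2$, is dihedral of order $8$ and hence non-abelian. For an explicit witness, the element $\begin{pmatrix}1&1&0\\0&1&1\\0&0&1\end{pmatrix}$ has order $4$, as a direct check shows. So $\GL_3(\mathbb F_2)$ contains an element of order $4$, while $\PGL_2(K)$ contains none, because every element of order dividing $4$ is unipotent and squares to the identity. Therefore no injective homomorphism $G_{\mathcal L}\to\PGL_2(K)$ can exist, since it would send an element of order $4$ to an element of order $4$.

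\textbf{Where the care goes.} The main point needing care is the claim that an element of order $4$ in $\PGL_2(K)$ would have to be unipotent, and that unipotent elements have order $2$. If $g^4=1$ projectively, lift $g$ to $A$ with $A^4=\lambda I$, and rescale so that $A^4=I$. Then $(A+I)^4=0$, so $A$ is unipotent on $K^2$. This forces $(A+I)^2=0$, hence $A^2=I$, so $g$ has order at most $2$. This step uses only that $K$ is algebraically closed, so fourth roots exist, and that we are in dimension $2$. With it the proof is complete, and Dickson's classification is not needed.
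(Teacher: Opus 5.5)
Your proof is correct, but it takes a different route from the paper.

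\textbf{The paper's route.} The paper invokes Dickson's classification of finite subgroups of $\PGL_2(K)$ in characteristic $p$. A subgroup of even order $168$ would then have to be one of two kinds:
\begin{enumerate}
\item of affine type, which is impossible since $\PSL_2(\mathbb F_7)$ has no normal Sylow $2$-subgroup;
\item isomorphic to $\PSL_2(\mathbb F_{2^f})=\PGL_2(\mathbb F_{2^f})$, whose orders $2^f(2^{2f}-1)$ are never $168$.
\end{enumerate}

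\textbf{Your route.} You replace the classification with a local obstruction. Lift an element whose projective order divides $4$ to $A$ with $A^4=I$. The Frobenius identity $(A+I)^4=A^4+I=0$ and Cayley--Hamilton for nilpotent $2\times 2$ matrices force $(A+I)^2=0$, i.e.\ $A^2=I$. So $\PGL_2(K)$ has no element of order $4$. On the other hand, $\begin{pmatrix}1&1&0\\0&1&1\\0&0&1\end{pmatrix}\in\GL_3(\mathbb F_2)\cong G_{\mathcal L}$ has order $4$.

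\textbf{What each approach buys.} Your argument is elementary and self-contained. It also does not depend on quoting the classification precisely: the two-case dichotomy cited in the paper omits the dihedral groups of order $2m$ with $m$ odd, which do occur in characteristic $2$, though harmlessly here. Your argument also proves more: no finite group whose Sylow $2$-subgroup is not elementary abelian embeds in $\PGL_2(K)$ in characteristic $2$. The classification route is what one needs when your obstruction is silent. An example is $\PSL_2(\mathbb F_{11})$, whose Sylow $2$-subgroups are Klein four-groups; it is excluded only by the order count.

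\textbf{A presentational remark.} In your ``direct argument'' paragraph, the claim that every finite $2$-subgroup is conjugate into $\{u(t)\}$ does not follow from the discussion of commuting involutions alone. It needs your order-$4$ lemma first: exponent $2$ then forces commutativity, and hence a common fixed point. Since the final contradiction uses only that lemma, this does not affect correctness.
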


\begin{proof}
Assume $\operatorname{char}(K)=2$ and suppose for contradiction that $G_{\mathcal L}$ embeds in $\PGL_2(K)$.
By Example~\ref{ex:char2}, one has
\[
G_{\mathcal L}\cong \PSL_2(\mathbb F_7),
\]
so in particular $|G_{\mathcal L}|=168$.

By Dickson's classification of finite subgroups of $\PGL_2$ in characteristic $p$, a finite subgroup of
$\PGL_2(K)$ whose order is divisible by $p$ is either
\begin{itemize}
\item $p$-semi-elementary (of affine type), hence has a nontrivial normal Sylow $p$-subgroup, or
\item isomorphic to $\PSL_2(\mathbb F_q)$ or $\PGL_2(\mathbb F_q)$ for some $q=p^f$.
\end{itemize}
See, for instance, \cite[Ch.~III, \S6]{HuppertEndlicheGruppen} or \cite[\S2]{SuzukiFiniteGroups}.

Now $\PSL_2(\mathbb F_7)$ has no normal Sylow $2$-subgroup, so it cannot be of affine type.
Hence it would have to be isomorphic to $\PSL_2(\mathbb F_{2^f})$ or $\PGL_2(\mathbb F_{2^f})$ for some $f\ge 1$.
Since $q=2^f$ is even, one has
\[
\PSL_2(\mathbb F_q)=\PGL_2(\mathbb F_q),
\]
and therefore
\[
|\PSL_2(\mathbb F_{2^f})|
=2^f(2^{2f}-1)
\in \{6,60,504,\dots\},
\]
which is never equal to $168$.
This contradiction shows that $G_{\mathcal L}$ does not embed in $\PGL_2(K)$.
\end{proof}

\medskip

\noindent{{\bf Acknowledgment.} Throughout the paper, the calculations were performed using the software Macaulay2~\cite{macaulay2}.   Favacchio is a member of GNSAGA-INdAM. The work of Favacchio was supported by the funding PREMIO\_SINGOLI\_RIC\_[2025] from the Department of Engineering, University of Palermo.}

\section*{}

\end{document}